\newtheorem{theorem}{Theorem}[section]
\newtheorem{lemma}[theorem]{Lemma}
\newtheorem{proposition}[theorem]{Proposition}
\newtheorem{corollary}[theorem]{Corollary}
\newtheorem{remark}[theorem]{Remark}
\newtheorem{example}[theorem]{Example}
\numberwithin{equation}{section}
\title{Regularity of the free boundary in an unstable parabolic problem}
\author{Mark Allen}
\address{Department of Mathematics, Brigham Young University, Provo,  UT 84602, US}
\email{allen@mathematics.byu.edu}
\author{Gilles Bokolo-Tamba}
\address{Department of Mathematics, Brigham Young University, Provo,  UT 84602, US}
\email{gilles.bokolo@mathematics.byu.edu}
\begin{document}
\begin{abstract}
    We study the free boundary in an unstable parabolic problem arising from a model in combustion. We consider the physical situation in which the heat advances and prove that the free boundary is a $C^{1,\alpha/2}$ hypersurface. 
\end{abstract}

\maketitle

\section{Introduction}
 We study solutions to the semilinear PDE 
 \begin{equation} \label{e:main}
     u_t - \Delta u = \chi_{\{u>0\}}, 
 \end{equation}
and our main object of study is the free boundary $\Gamma:= \partial \{u>0\}$. The right hand side is discontinuous which makes the study of the problem nontrivial. 
 The equation \eqref{e:main} arises as a limit in a combustion model \cite{ns89}. In one spatial dimension, solutions to \eqref{e:main} were studied in \cite{gh92} where a nontrivial self-similar solution is constructed. More recently, self-similar solutions in dimension $n>1$ were studied in \cite{fg24}. The time-independent version 
 \begin{equation} \label{e:elliptic}
 -\Delta u = \chi_{\{u>0\}}
 \end{equation}
  was studied in \cite{mw07} where it was shown that the free boundary is real analytic except on a singular set of Hausdorff dimension $n-2$. 
  The authors in \cite{mw07} showed that for energy-minimizing solutions, the singular set is empty; consequently, the free boundary for energy-minimizing solutions is locally real analytic. 
  A non energy minimizing solution of \eqref{e:elliptic} with a cross-singularity for the free boundary was constructed in \cite{aw06}, which also provided an example of a solution to \eqref{e:main} that was not $C^{1,1}$. 

  In one spatial dimension, the authors in \cite{gh92} studied the regularity of the free boundary under the assumption $u_x>0$. In this article we initiate study of the free boundary for the parabolic problem \eqref{e:main} in higher dimensions and include the more difficult situation in which the spatial gradient $\nabla u$ is allowed to vanish. The equation \eqref{e:main} resembles the parabolic obstacle problem 
  $u_t - \Delta u=-\chi_{\{u>0\}}$ which has been studied extensively \cite{acm19}. However, the sign change for the right hand side forcing term changes the problem drastically. Firstly, the problem becomes unstable and solutions are no longer unique. Secondly, quasi-convexity which is a key tool in the study of obstacle problems, is unavailable because of the sign change. Also, in the parabolic setting, our approach to studying the free boundary of \eqref{e:main} is different than in \cite{mw07} for the elliptic problem \eqref{e:elliptic}. In \cite{mw07}, the authors use the implicit function theorem (where the gradient is nonvanishing) to immediately obtain $C^{1,\alpha}$ regularity of the free boundary. Standard techniques then improve $C^{1,\alpha}$ regularity of the free boundary to real analyticity. The difficulties in \cite{mw07} were to prove that the gradient does not vanish for energy minimizing solutions of \eqref{e:elliptic}, and also bound the size of the set where the gradient vanishes for all solutions of \eqref{e:elliptic}. For the free boundary problem in the parabolic setting \eqref{e:main}, the implicit function theorem is unavailable since $\partial_t u$ is not continuous. In fact, in Section \ref{s:unbound} we construct an example where $u_t$ is unbounded. Therefore, our approach to the free boundary in the parabolic setting is necessarily different from the elliptic setting.  
  
  We consider the situation in which heat advances, and so we will assume that $u_t \geq c$. Our main result is the following. 

  \begin{theorem} \label{t:main}
    Let $u$ be a solution to \eqref{e:main} in $Q_1$ and assume that $\partial_t u \geq c$ on 
    $\partial_p Q_1$. Then for any $0<\alpha<1$, the free boundary $\partial\{u>0\}$ is locally a $C^{1,\alpha/2}$ function of the spatial variable $x$. 
  \end{theorem}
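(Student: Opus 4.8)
The strategy I would pursue is the classical blow-up / monotonicity-formula approach adapted to this parabolic setting, with the key input being the assumption $\partial_t u \geq c > 0$, which should propagate from $\partial_p Q_1$ into the interior by a comparison/maximum-principle argument applied to the equation satisfied by $u_t$ (at least in the distributional sense, noting that $\chi_{\{u>0\}}$ is nondecreasing along trajectories where heat advances). First I would establish optimal regularity: one expects $u \in C^{1,1}_x \cap C^{0,1}_t$ near the free boundary away from the degenerate behavior, using that $u \geq 0$, $u$ solves a parabolic equation with bounded right-hand side, and standard $W^{2,1}_p$ estimates; the sign condition on $u_t$ is what rules out the pathological non-$C^{1,1}$ examples alluded to in the introduction (as in \cite{aw06}). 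Then I would classify blow-ups: rescaling $u$ parabolically at a free boundary point $(x_0,t_0)$ via $u_r(x,t) = u(x_0 + rx, t_0 + r^2 t)/r^2$, the limit $u_0$ solves \eqref{e:main} globally, is nonnegative, and—crucially, because $\partial_t u_0 \geq 0$ survives the scaling and in fact $\partial_t u_r \geq c r^{-1} \to \infty$ forces a half-space structure—should be the half-plane solution (a function of one spatial variable and $t$, matching the one-dimensional self-similar profile of \cite{gh92}) or the purely time-like solution; the advancing-heat hypothesis excludes the unstable cross-type singular cones that appear in \cite{mw07,aw06}.

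The central step is then a directional monotonicity estimate: I would show that for every unit spatial vector $e$ in a cone around the "good" direction (the inward spatial normal identified from the blow-up), $\partial_e u \geq 0$ in a parabolic neighborhood of the free boundary point. This is where I expect the main difficulty to lie. In the elliptic problem one uses quasi-convexity, which the introduction explicitly says is unavailable here due to the sign of the forcing term; so the argument must instead exploit $\partial_t u \geq c$ directly. The idea would be to differentiate the equation: $\partial_e u$ is caloric in $\{u>0\}$, it is $\geq 0$ on the free boundary (since the free boundary is a graph over the good directions at the infinitesimal level and $u$ vanishes there to second order with the right sign of the Hessian), and its behavior can be controlled by comparing with $\varepsilon \partial_t u$—using that $\partial_t u - \Delta u = \chi_{\{u>0\}} \geq 0$ gives a useful subsolution. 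Combining a barrier built from $\partial_t u \geq c$ with the caloric estimate for $\partial_e u$ in $\{u>0\}$, one propagates positivity of $\partial_e u$ from the interior to a full neighborhood, yielding that $\{u>0\}$ is, locally, the subgraph of a Lipschitz function $x_n = g(x',t)$.

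Finally I would upgrade Lipschitz to $C^{1,\alpha/2}$. Once the free boundary is a Lipschitz graph and the blow-up is unique (the half-plane solution, which follows from an epiperimetric- or Weiss-type monotonicity argument combined with the rigidity forced by $\partial_t u \geq c$), I would run the standard iteration: at each free boundary point the rescalings converge to the same half-plane solution with a uniform rate, the spatial normal of the approximating hyperplanes is Hölder continuous in the parabolic distance, and since the free boundary is a spatial graph the $t$-regularity is only "half" as good—hence $C^{1,\alpha/2}$ jointly, i.e. the normal direction is $C^{\alpha}$ in $x'$ and $C^{\alpha/2}$ in $t$, which is exactly the mixed Hölder class in the statement. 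The decay rate $\alpha$ is free in $(0,1)$ because the convergence rate in the blow-up can be taken arbitrarily close to linear; one does not get $\alpha = 1$ since $\partial_t u$ need not be continuous. I would package the normal-continuity estimate via a dyadic "improvement of flatness" lemma: if the free boundary is $\delta$-flat at scale $r$ in the parabolic cylinder, it is $(\delta/2)$-flat at scale $\theta r$, proved by compactness against the classified (linear) blow-up profiles.
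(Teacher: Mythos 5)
Your proposal follows the obstacle-problem playbook (optimal regularity, blow-up classification, directional monotonicity, improvement of flatness), but several of its load-bearing steps fail for this unstable problem, and the paper's actual argument is entirely different. First, the claimed optimal regularity $u\in C^{1,1}_x\cap C^{0,1}_t$ is false under the hypotheses of the theorem: Section \ref{s:unbound} shows that for Example \ref{ex:last}, which satisfies $u_t\geq c$, neither $u_t$ nor $|D^2u|$ is bounded near the free boundary, so the advancing-heat hypothesis does \emph{not} rule out the loss of $C^{1,1}$; it only yields $\nabla u\in C^{\alpha,\alpha/2}$ (Lemma \ref{l:holderspace}). Second, solutions here are not nonnegative ($u$ changes sign; $\{u<0\}$ is typically nonempty), so arguments leaning on $u\geq 0$ do not apply. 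Third, your scaling claim is incorrect: with $u_r(x,t)=u(x_0+rx,t_0+r^2t)/r^2$ one has $\partial_t u_r=\partial_t u\geq c$, not $cr^{-1}\to\infty$, so the blow-up is not forced to be time-like; worse, the quadratic normalization need not produce a convergent blow-up at degenerate points, since $\sup_{Q_r}|u|/r^2$ can be unbounded (this is exactly the mechanism of Lemma \ref{l:growth} and Theorem \ref{t:blowup}). Finally, the directional-monotonicity/spatial-graph structure $x_n=g(x',t)$ breaks down at points where $\nabla u$ vanishes (e.g.\ where a negativity set collapses to a point), which is precisely the delicate case.

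The paper instead exploits $u_t\geq c$ to write $\Gamma$ as a Lipschitz graph $t=H(x)$ over the \emph{spatial} variable (Lemma \ref{l:lipschitz}). Near points with $|\nabla u|>0$ it performs a hodograph transform in $x_n$, obtaining a divergence-form parabolic equation whose Nash--De Giorgi and Schauder theory gives local H\"older continuity of $u_t$, hence smoothness of $\Gamma$ there (Theorem \ref{t:hodograph}). At points where $\nabla u=0$ it shows $H$ is differentiable with zero derivative directly from the $C^{\alpha,\alpha/2}$ gradient estimate and $u_t\geq c$ (Lemma \ref{l:zeroder}). The $C^{1,\alpha/2}$ conclusion then comes from a quantitative balance (Corollary \ref{c:rescale}) between the decay of $|\nabla u|$ and the blow-up of the H\"older norm of $u_t$ as one approaches a degenerate point --- no monotonicity formula, blow-up classification, or improvement-of-flatness iteration is used in the proof of the main theorem. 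To salvage your approach you would need, at minimum, a correct replacement for the optimal-regularity and blow-up steps, which the paper's counterexamples show cannot be taken off the shelf.
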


   Our approach to proving Theorem \ref{t:main} is the following. We utilize known regularity estimates of solutions to \eqref{e:main} as well as the property $u_t \geq c$ to obtain that the free boundary is a Lipschitz graph over the spatial variable $x$. We then consider points $(x_0,t_0)$ where the spatial gradient is nonvanishing, i.e. $|\nabla u(x_0,t_0)|>0$. Of course, the implicit function theorem will give local $C^{1,\alpha}$ estimates in a neighborhood of $(x_0,t_0)$ of the spatial free boundary $\partial\{u(\cdot, t_0)>0\}$. However, since it is not known a priori that $u_t$ is continuous in a neighborhood of $(x_0,t_0)$ we cannot invoke the implicit function theorem to obtain $C^{1,\alpha}$ regularity of the space-time free boundary $\Gamma$. Instead, we utilize the hodograph transform in the spatial variable and obtain a nonlinear parabolic equation which will show that $u_t$ is H\"older continuous in a neighborhood of $(x_0,t_0)$. We then prove that the free boundary $\Gamma$ is differentiable with derivative zero at points where the spatial gradient vanishes, and thus the $\Gamma$ is differentiable at all points. As shown in our constructed example, $\partial_t u$ is not necessarily bounded. Consequently, we prove a bound on how the H\"older continuity of $\partial_t u$ diverges while the spatial gradient decays. Using this bound, we are able to obtain the $C^{1,\alpha/2}$ regularity of the full space-time free boundary. 


 \subsection{Outline of the paper}
  In Section \ref{s:examples} we provide several examples of solutions to \eqref{e:main} that illustrate possible behavior such as non-uniqueness. In Section \ref{s:er} we state known regularity results for solutions to \eqref{e:main} and show solutions exist. In Section \ref{s:regularity} we prove our main result Theorem \ref{t:main}. In Section \ref{s:unbound} we show that a solution to \eqref{e:main} can have an unbounded time derivative. 
   
 \subsection{Notation}
  \begin{itemize}
      \item Throughout the paper $(x,t) \in \mathbb{R}^n \times (-\infty,\infty)$. The distance in Euclidean space $\mathbb{R}^n$ will be denoted by $|x-y|=\sqrt{\sum_{i=1}^n (x_i-y_i)^2}$. As a special case, the distance in $\mathbb{R}$ for $|t-s|=\sqrt{(t-s)^2}$ will have the same notation. 
      \item The spatial gradient is denoted by $\nabla u = (u_{x_1}, \ldots, u_{x_n})$. 
      \item The Hessian of $u$ in the spatial variable $x$ is denoted by $D^2 u$. 
      \item $Q_r(x,t):=\{(y,s): |x-y|+|t-s|^{1/2} < r\}$. We note that this definition differs from some in the literature which requires $s\leq t$. We find it more convenient to allow $s>t$.  
      \item When $(x,t)=(0,0)$, we will simply denote $Q_r(0,0)=Q_r$. 
      \item $\partial_{p} Q_r(x,t):= (B_r \times \{t-r^2\})\cup(\partial B_r \times [t-r^2,t+r^2))$, the parabolic boundary. 
      \item $u \in C^{\alpha}(Q_r(x,t))$ the H\"older space, with norm 
      \[
       \| u \|_{C^{\alpha}(Q_r(x,t))}:=\sup_{Q_r(x,t)} |u| + \sup_{\substack{(y,s),(z,\tau) \in Q_r(x,t)\\(y,s) \neq (z,\tau) }} \frac{|u(y,s)-u(z,\tau)|}{(|x-y|^2 + |t-s|^2)^{\alpha/2}}. 
      \]
      \item Due to the natural parabolic scaling, we also consider the H\"older space $C^{\alpha,\alpha/2}(Q_r(x,t))$ with 
      \[
      \| u \|_{C^{\alpha,\alpha/2}(Q_r(x,t))}:= \sup_{Q_r(x,t)} |u| + \sup_{\substack{(y,s),(z,\tau) \in Q_r(x,t) \\ (y,s) \neq (z,\tau)}} \frac{|u(y,s)-u(z,\tau)|}{(|x-y|^\alpha + |t-s|^{\alpha/2})}
      \]
      We note that from \eqref{e:alphabound} the above norm is topologically equivalent to others found in the literature. 
      \item $\Gamma := \partial \{u(x,t)>0\}$ is the space-time free boundary. 
  \end{itemize}

\section{Instructive Examples} \label{s:examples}
  Here we provide some instructive examples to understand what one can and cannot expect from solutions to \eqref{e:main}. 
  \begin{example} \label{ex:timeonly}
   Let $u(x,t):=\max\{t,0\}$. 
  \end{example}
  Example \ref{ex:timeonly} shows that the time derivative $u_t$ need not be continuous. Also, 
  example \ref{ex:timeonly} illustrates the nonuniqueness phenomenon as any translation $v(x,t):=u(x,t-\tau)$ with $\tau>0$ has the same initial data time $t=0$. A local version illustrating the same principles is given as follows.  
  \begin{example} \label{ex:local}
   Let $u$ be the unique solution to 
   \[
   \begin{cases}
       u_t - \Delta u=1 \quad \text{ in } B_1 \times (0,1) \\
       u=0 \quad \text{ on } (\partial B_1 \times (0,1))\cup(B_1 \times \{0\}). 
   \end{cases}
   \] 
  \end{example}
  Note that $v\equiv 0$ will also be a solution to \eqref{e:main} with the same boundary data as well as any translation in time $v(x,t):=u(x,t-\tau)$ with $\tau>0$. Notice again that for the translations $v_t$ is not continuous. This example also illustrates that $v_t$ will not satisfy a strong minimum principle. More specifically, notice that $v_t \geq 0$ and $v_t$ is not identically zero; however, $v_t$ takes zero values on the interior.

  We now state a nontrivial example of global nonuniqueness which was constructed in \cite{gh92}. 
  \begin{example} \label{ex:selfsimilar}
     There exists a solution to 
     \[
     \begin{cases}
      u_t - u_{xx}=\chi_{\{u>0\}} \quad \text{ in } \mathbb{R} \times (0,\infty) \\
      u(x,0)=-c_1x^2 \quad \text{ on } \mathbb{R} \times \{0\}, 
      \end{cases}
     \]
     which is self-similar, i.e. satisfying $u(rx,r^2t)=r^2u(x,t)$, and $u(0,1)>0$.  
  \end{example}
  Note that $v(x,t)=-c(x^2+2t)$ is a different solution with the same initial data. Analogous self similar solutions to Example \ref{ex:selfsimilar} in higher dimension were constructed recently in \cite{fg24}. 

  Any solution to the elliptic problem \eqref{e:elliptic} will also be a solution to \eqref{e:main}, so the example constructed in \cite{aw06} with a cross-singularity is also an example for \eqref{e:main}. 
  \begin{example} \label{ex:elliptic}
      There exists a time-independent solution to \eqref{e:main} with a cross-singularity at $(0,t)$ for all $t$.  
  \end{example}
  Example \ref{ex:elliptic} illustrates that it is possible for the spatial gradient to vanish as well as illustrating that the spatial Hessian $|D^2 u|$ need not be bounded. Example \ref{ex:elliptic} also shows that without the assumption $u_t \geq c$, one cannot expect differentiability of the free boundary $\Gamma$. Since Examples \ref{ex:elliptic} and \ref{ex:local} do not satisfy our assumption $u_t \geq c$, we provide an additional example with $u_t \geq c$ which will also show that we cannot expect $u_t$ to be bounded. 

  The following example will be the most important for this paper and will illustrate that $u_t$ need not be bounded which we prove in Section \ref{s:unbound}. 
  \begin{example} \label{ex:last}
      Let $u$ be the least solution (constructed as shown in Section \ref{s:er}) to 
      \[
     \begin{cases}
      u_t - \Delta u=\chi_{\{u>0\}} \quad \text{ in } B_1 \times (0,\infty) \\
      u(x,0)=2x^2-1 \quad \text{ on } B_1 \times \{0\} \\
      u(x,t)=t+1 \quad \text{ on } \partial B_1 \times (0,\infty).
      \end{cases}
     \]
  \end{example}
  As time increases in Example \ref{ex:last}, the set $\{u(\cdot,t)<0\}$ will be an open interval which will shorten in length and collapse to a point. In dimension $n=1$, neither $|u_t|$ nor $|D^2 u|$ will be bounded on $\Gamma$.

\section{Regularity and Existence} \label{s:er}
In this section we prove existence and regularity estimates for solutions to \eqref{e:main}. Since the right hand side of \eqref{e:main} satisfies 
$0 \leq \chi_{\{u>0\}} \leq 1$, we first state some a priori estimates to solutions of the linear equation 
\begin{equation} \label{e:w}
 \begin{cases}
   &u_t - \Delta u = f \quad \text{ in } Q_1 \\
   &u=\psi \quad \text{ on } \partial_p Q_1.
 \end{cases}
 \end{equation}
 We assume that $f\in L^{\infty}$ and state some bounds for $u$ and its derivatives. First, $u,\nabla u, D^2 u, u_t \in L^p(Q_1)$ for any $1\leq p<\infty$. More precisely, see Theorem 7.13 in \cite{l96}, we have the bounds
 \begin{equation} \label{e:lp}
  \int_{Q_{1/2}} |u_t|^p + |\nabla u|^p+ |D^2 u|^p \leq C(p) \int_{Q_1} |f|^p + |u|^p.
 \end{equation}
 If the boundary data is smooth, then we have the bounds up to the boundary:
\[
 \int_{Q_{1}} |u|^P + |u_t|^p + |\nabla u|^p+ |D^2 u|^p 
 \leq \int_{Q_{1}}|f|^p+ |\psi|^p + |\psi_t|^p + |\nabla \psi |^p+ |D^2 \psi|^p.  
\]
From the Sobolev embedding theorem for dimension $n+1$, the bound \eqref{e:lp} implies that 
\begin{equation} \label{e:holderu}
 \| u \|_{C^{\alpha}(Q_{1/2})}
 \leq C(\alpha) ( \sup_{Q_1} |u| +\| f \|_{L^{\infty}}) \quad \text{ for any } 0< \alpha < 1. 
\end{equation}
Since \eqref{e:holderu} shows that $u$ is continuous, then $\{u>0\}$ is open, and the free boundary $\Gamma$ is well-defined. 

Since $f \in L^{\infty}$, we also have H\"older estimates on the spatial derivatives of $w$. From Theorem 4.1 on page 584 of \cite{l68}, there exists $\gamma>0$ and a constant $C$ such that a solution $w$ satisfies
\begin{equation} \label{e:holdergrad}
 \|\nabla u\|_{C^{\gamma}(Q_{1/2})} \leq C (\sup_{Q_1} |u| + \| f\|_{L^{\infty}}). 
\end{equation}
Again, if $\psi$ is smooth, then we have estimates up to the boundary. We note that in the above estimate, one may either consider H\"older regularity for the  scalar function $|\nabla u|$, or H\"older regularity for each of the components separately for the vector-valued function $\nabla u$. 

Using a scaling argument, we can upgrade the H\"older continuity of the gradient. 
\begin{lemma} \label{l:holderspace}
 Let $u$ be a solution to \eqref{e:w}.  For any $\alpha<1$, there exists a constant $C$ depending on $\alpha$ such that 
 \[
 \| \nabla u \|_{C^{\alpha,\alpha/2}(Q_{1/2})} \leq C (\sup_{Q_1}|u| +\|f\|_{L^{\infty}}). 
 \]
\end{lemma}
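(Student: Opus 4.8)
The plan is to bootstrap the (possibly small) exponent in \eqref{e:holdergrad} up to any $\alpha<1$ by a Campanato‑type iteration run at all parabolic scales. Fix $z_0=(x_0,t_0)\in Q_{1/2}$, set $M:=\sup_{Q_1}|u|+\|f\|_{L^\infty}$, and write $\omega(\rho):=\operatorname{osc}_{Q_\rho(z_0)}\nabla u$. It suffices to prove $\omega(\rho)\le C(\alpha)\,M\,\rho^{\alpha}$ for all $\rho$ below a fixed radius $r_0$: since $(a+b)^{\alpha}\le a^{\alpha}+b^{\alpha}$, this together with the bound $\sup_{Q_{1/2}}|\nabla u|\le CM$ contained in \eqref{e:holdergrad} yields the asserted $C^{\alpha,\alpha/2}$ estimate (for pairs of points at parabolic distance $\ge r_0$ the difference quotient is controlled crudely by $2\sup|\nabla u|/r_0^{\alpha}$), and a finite covering passes from the estimate centered at an arbitrary $z_0\in Q_{1/2}$ to the uniform one on $Q_{1/2}$. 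Throughout, parabolic balls $Q_r$ and standard cylinders of comparable size may be used interchangeably up to fixed constants.

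To get the decay of $\omega$, at each scale $r\le r_0$ I would split $u=h_r+w_r$ on $Q_r(z_0)$, where $h_r$ solves the homogeneous heat equation with $h_r=u$ on the parabolic boundary and $w_r$ solves $\partial_t w_r-\Delta w_r=f$ with $w_r=0$ on the parabolic boundary. A barrier comparison gives $\sup_{Q_r(z_0)}|w_r|\le Cr^{2}\|f\|_{L^\infty}$, and rescaling $w_r$ to unit size and invoking \eqref{e:holdergrad} (or \eqref{e:lp} with $p>n+2$ together with Sobolev embedding) bounds its gradient, $\sup_{Q_{r/2}(z_0)}|\nabla w_r|\le Cr\|f\|_{L^\infty}\le CrM$, so $\operatorname{osc}_{Q_\rho(z_0)}\nabla w_r\le CrM$ for $\rho\le r/2$. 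On the other hand each component $\partial_{x_i}h_r$ is caloric in the interior, so interior estimates give $\|D^{2}h_r\|_{L^\infty(Q_{r/4}(z_0))}\le \frac{C}{r}\operatorname{osc}_{Q_{r/2}(z_0)}\nabla h_r$ and $\|\partial_t\nabla h_r\|_{L^\infty(Q_{r/4}(z_0))}\le\frac{C}{r^{2}}\operatorname{osc}_{Q_{r/2}(z_0)}\nabla h_r$; since $|t-t_0|\lesssim\rho^{2}$ on $Q_\rho(z_0)$, both contributions are $O(\rho/r)$, whence $\operatorname{osc}_{Q_\rho(z_0)}\nabla h_r\le C\frac{\rho}{r}\operatorname{osc}_{Q_{r/2}(z_0)}\nabla h_r$ for $\rho\le r/8$. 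Combining these via $\operatorname{osc}\nabla h_r\le\operatorname{osc}\nabla u+\operatorname{osc}\nabla w_r$ produces the recursion
\[
\omega(\rho)\ \le\ C_0\,\frac{\rho}{r}\,\omega(r)\ +\ C_0\,rM ,\qquad 0<\rho\le r/8,\quad r\le r_0 .
\]

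Finally I would run the standard iteration lemma. Since $\alpha<1$, choose $\tau\in(0,1/8]$ with $C_0\tau\le\tau^{\alpha}$; then $\omega(\tau^{k+1}r_0)\le\tau^{\alpha}\omega(\tau^{k}r_0)+C_0r_0M\tau^{k}$, and summing this geometric recursion (convergent because $1-\alpha>0$), starting from $\omega(r_0)\le CM$ given by \eqref{e:holdergrad}, yields $\omega(\tau^{k}r_0)\le C(\alpha)M\,\tau^{k\alpha}$; interpolating across consecutive radii gives $\omega(\rho)\le C(\alpha)M\rho^{\alpha}$ for all $\rho<r_0$, which completes the argument. The main obstacle is quantitative bookkeeping rather than a new idea: the caloric term contributes genuinely first‑order decay in $\rho/r$ while the inhomogeneous term contributes a first‑order error in $r$ — an ``$\alpha=1$'' profile perturbed by an $O(r)$ term — so the iteration reaches every $\alpha<1$ but, as it must (the borderline $C^{1,1}$ conclusion fails for merely bounded right‑hand sides), not $\alpha=1$. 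The care is in carrying both the $D^2 h_r$ and the $\partial_t\nabla h_r$ pieces with their correct parabolic weights, and in handling the harmless mismatch between parabolic balls and cylinders when applying the barrier and \eqref{e:holdergrad} to $w_r$.
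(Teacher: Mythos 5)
Your argument is correct, and it reaches the lemma by a genuinely different route than the paper. The paper's proof is a compactness/contradiction blow-up: it normalizes $\sup_{Q_1}|u|+\|f\|_{L^\infty}\le 1$, assumes the pointwise first-order Taylor bound $S_r(u,(x,t))\le Cr^{1+\alpha}$ fails along a sequence with a doubling control, rescales, and extracts a global caloric limit of growth $O(R^{1+\alpha})$; Liouville-type derivative estimates then force the limit to be affine, hence identically zero by the normalization at the origin, contradicting $S_1(u_0,(0,0))=1$. Your Campanato iteration — splitting $u=h_r+w_r$ at every scale, using the barrier bound $|w_r|\le Cr^2\|f\|_{L^\infty}$ and hence $|\nabla w_r|\le CrM$, and interior caloric estimates for $\partial_i h_r$ to obtain $\omega(\rho)\le C_0(\rho/r)\,\omega(r)+C_0rM$ — is the direct, quantitative counterpart of the same fact, and your bookkeeping of the two caloric contributions ($D^2h_r$ weighted by $\rho$, $\partial_t\nabla h_r$ weighted by $\rho^2\lesssim \rho r$) is exactly what is needed to land on the parabolic seminorm $C^{\alpha,\alpha/2}$. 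What your version buys: an explicit, effective constant $C(\alpha)$ (the contradiction argument yields none), and a largely self-contained proof that needs only the maximum principle and interior estimates for caloric functions, with \eqref{e:holdergrad} used merely to seed $\omega(r_0)\le CM$ (this seed could even be replaced by \eqref{e:lp} together with Sobolev embedding). What the paper's version buys is brevity and reuse of a template that reappears later (e.g.\ in Lemma \ref{l:growth}), at the price of being non-constructive. Your closing observation that the iteration must stop short of $\alpha=1$ is consistent with the failure of $C^{1,1}$ regularity noted for Example \ref{ex:elliptic} and \cite{aw06}.
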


\begin{proof} 
By the linearity of the equation, we can divide out by $\displaystyle \sup_{Q_1}|u| + \| f \|_{L^{\infty}}$ and thus assume $\displaystyle\sup_{Q_1}|u| + \| f \|_{L^{\infty}}\leq 1$.
 From \eqref{e:holdergrad}, we have the gradient is universally bounded and continuous. 
  For a solution $u$ we define
 \[
 S_r(u,(x,t)):= \sup_{Q_r(x,t)} |u(y,s)-u(x,t)-\nabla u(x,t) \cdot (y-x)|
 \]
  We must show for a point $(x,t) \in Q_{1/2}$ that there exists a constant $C$ such that 
 \[
  S_r(u,(x,t))\leq Cr^{1+\alpha}.   
 \]
 We employ a scaling argument to obtain a proof by contradiction, see for instance Theorem 6.1 in \cite{alp15}. Suppose by way of contradiction that no constant $C$ exists for the above inequality Then there exists a sequence of solutions $u_k$ with right hand sides $f_k$ and points $(x_k,t_k)$ such that 
 \[
 \frac{S_{r_k}(u_k, (x_k,t_k))}{r_k^{1+\alpha}} \to \infty,
 \]
 and 
 \begin{equation} \label{e:liouville}
 S_{r_k 2^j}(u_k,(x_k,t_k))\leq 2^{j(1+\alpha)}S_{r_k}(u_k,(x_k,t_k)) \quad \text{ for all integers } j\leq k. 
 \end{equation}
 We rescale by 
 \[
 u_{r_k}(y,s):= \frac{u(r_ky+x,r_k^2s+t)-u(x,t)-\nabla u(x,t)\cdot(y-x)}{S_{r_k}(u_k,(x_k,t_k))}.
 \]
 Notice that 
 \begin{equation} \label{e:caloric}
 |(\partial_t - \Delta) u_{r_k}|\leq \frac{r_k^2}{S_{r_k}(u_k,(x_k,t_k))}\leq r_k^{1-\alpha} \quad \text{ on } Q_{r^{-k}}(0,0). 
 \end{equation}
 Then from \eqref{e:lp}, \eqref{e:holderu} and \eqref{e:holdergrad} we have that $u_{r_k}$ converges to a limiting solution $u_0$ with the following convergence 
 \[
 u_k \to u_0, \ \nabla u_k \to \nabla u_0 \text{ in } C^{\gamma}, \ \text{ and }  \partial_t u_k \to \partial_t u_0, \  D^2 u_k \rightharpoonup D^2 u_0 \ \text{ in } L^p
 \]
 on all compact subsets of $\mathbb{R}^{n} \times (-\infty,\infty)$. Then we have the following properties for $w_0$:
 \[
 \begin{aligned}
  &(1)\quad (\partial_t-\Delta)u_0 = 0 \quad \text{ in } R^n \times (-\infty,\infty)
  &\text{ from } \eqref{e:caloric}\\
  &(2) \quad S_R(u_0,(0,0))\leq CR^{1+\alpha} \text{ for } R\geq 1 &\text{ from } \eqref{e:liouville} \\
  &(3) \quad u_0(0,0)=0, \ |\nabla u_0(0,0)|=0 &\text{ from $\gamma$-H\"older convergence} \\
  &(4) \quad S_1(u_0,(0,0))=1 &\text{ from $\gamma$-H\"older convergence}.
 \end{aligned}
 \]
 From properties $(1)$ and $(2)$ above, we can utilize the following standard bound for caloric functions (see \cite{e10})
 \[
 \sup_{Q_{R}} |u_t| \leq \frac{C}{R^{n+2+2}} \| u \|_{L^1(Q_{2R})}\leq \frac{C}{R^{n+2+2}} R^{n+2+1+\alpha}\leq C R^{\alpha-1}. 
 \]
 Letting $R \to \infty$ we conclude $|u_t|\equiv 0$. A similar bound and argument proves that $|D^2 u|\equiv 0$. Then $u_0$ is an affine function. From the property $(3)$ above $u_0 \equiv 0$. However, this contradicts the property $(4)$ listed above. 
\end{proof}

 We will often implicitly utilize the following elementary estimate for $x,y\geq0$ when dealing with H\"older regularity
 \begin{equation} \label{e:alphabound}
  (x+y)^{\alpha} \leq x^{\alpha}+y^{\alpha} \leq 2(x+y)^{\alpha}. 
 \end{equation}
 We also note that since $u \in C^{\alpha,\alpha/2}$ for every $\alpha<1$, then if $0< \alpha<\beta<1$,
 then 
 \[
 \begin{aligned}
 |u(x,t)-u(y,s)| &\leq C(\beta)(|x-y|+ |t-s|^{1/2})^{\beta}\\
 &= C(\beta) (|x-y|+ |t-s|^{1/2})^{\beta-\alpha}(|x-y|+ |t-s|^{1/2})^{\alpha}. 
 \end{aligned}
 \]
 Thus, if $C(\beta) (|x-y|+ |t-s|^{1/2})^{\beta-\alpha}\leq 1$, then 
 \begin{equation} \label{e:1holder1}
 |u(x,t)-u(y,s)| \leq (|x-y|+ |t-s|^{1/2})^{\alpha}. 
 \end{equation}
 Throughout the paper, for convenience we will often assume $(x,t)$ and $(y,s)$ are close enough so that \eqref{e:1holder1} holds, and we will therefore omit the constant $C$.

With the appropriate a priori estimates, we now prove existence of solutions to  \eqref{e:main}. We will construct a least solution. We consider a right hand side defined by 
\begin{equation} \label{e:rhs}
 f_{\epsilon}(x):=
 \begin{cases}
  0 &\text{ if } x<0 \\
  x/\epsilon  &\text{ if } 0\leq x \leq \epsilon \\
  1  &\text{ if } x>0. 
 \end{cases}
\end{equation}

The following existence theorem is classical since the right hand side $f$ is Lipschitz continuous. For instance, one could use the method of sub and supersolutions (shown in the elliptic case in section 9.3 in \cite{e10}), or a fixed point theorem (shown in the parabolic case with zero lateral boundary in Chapter 10 in \cite{r13}).
\begin{lemma} \label{l:eandu}
 Let $\Omega\subset \mathbb{R}^n$ be a smooth bounded domain, and let $\psi(x,t)$ be a smooth function. There exists a unique classical solution $u^{\epsilon}$ to 
 \begin{equation} \label{e:epsilon}
 \begin{cases}
  u^{\epsilon}_t - \Delta u^{\epsilon} = f_{\epsilon}(u^{\epsilon}) \quad \text{ in } \Omega \times (0,T) \\
  u^{\epsilon}=\psi \quad \text{ on } (\Omega \times \{0\})\cup (\partial \Omega \times (0,T))
 \end{cases}
 \end{equation}
\end{lemma}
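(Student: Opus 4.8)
The plan is to treat \eqref{e:epsilon} as a semilinear perturbation of the heat equation and solve it by a contraction mapping argument, using crucially that $f_\epsilon$ is \emph{globally} Lipschitz, with constant $1/\epsilon$, and bounded, $0\le f_\epsilon\le 1$. Fix a time step $\tau>0$ to be chosen and work in the Banach space $X:=C(\overline{\Omega}\times[0,\tau])$. For $v\in X$ let $\Phi(v)$ be the unique solution of the linear problem
\[
 \begin{cases}
  w_t-\Delta w = f_\epsilon(v) & \text{in } \Omega\times(0,\tau)\\
  w=\psi & \text{on } (\Omega\times\{0\})\cup(\partial\Omega\times(0,\tau)),
 \end{cases}
\]
which exists, is unique, and is continuous up to the parabolic boundary by classical linear parabolic theory since $f_\epsilon(v)\in L^\infty$ and $\psi$ is smooth (for continuity one may invoke \eqref{e:holderu} together with its up-to-the-boundary version). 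Given $v_1,v_2\in X$, the difference $w=\Phi(v_1)-\Phi(v_2)$ solves the heat equation with right-hand side $f_\epsilon(v_1)-f_\epsilon(v_2)$ and zero data on the parabolic boundary, so comparing with the barriers $\pm t\,\|f_\epsilon(v_1)-f_\epsilon(v_2)\|_{L^\infty}$ gives $\|w\|_X\le\tau\,\|f_\epsilon(v_1)-f_\epsilon(v_2)\|_{L^\infty}\le(\tau/\epsilon)\|v_1-v_2\|_X$. Choosing $\tau:=\epsilon/2$ makes $\Phi$ a contraction, and Banach's fixed point theorem yields a unique $u^\epsilon\in X$ with $\Phi(u^\epsilon)=u^\epsilon$, i.e. a solution of \eqref{e:epsilon} on $\Omega\times(0,\tau)$.

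Next I would continue the solution in time. Since the existence time $\tau=\epsilon/2$ depends only on $\epsilon$ and not on the data, the same argument applied successively on $[\tau,2\tau]$, $[2\tau,3\tau]$, and so on — each time using the (continuous) restriction $u^\epsilon(\cdot,k\tau)$ as new initial data — extends $u^\epsilon$ uniquely to all of $\Omega\times(0,T)$ in finitely many steps. One may also record the a priori bound $\|u^\epsilon\|_{L^\infty(\Omega\times(0,T))}\le\sup|\psi|+T$, obtained by comparing $u^\epsilon$ with the solutions of $w_t-\Delta w=0$ and $w_t-\Delta w=1$ having data $\psi$; this confirms there is no blow-up, though it is not needed for the construction because $\tau$ is already uniform.

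Finally I would bootstrap regularity. With $u^\epsilon\in L^\infty$, the forcing term $f_\epsilon(u^\epsilon)$ is bounded, so \eqref{e:lp} and \eqref{e:holderu} (and their up-to-the-boundary analogues, using smoothness of $\psi$) give $u^\epsilon\in C^{\alpha,\alpha/2}$ for every $\alpha<1$; then $f_\epsilon(u^\epsilon)\in C^{\alpha,\alpha/2}$ because $f_\epsilon$ is Lipschitz, and parabolic Schauder estimates upgrade this to $u^\epsilon\in C^{2+\alpha,1+\alpha/2}$, so $u^\epsilon$ is a classical solution. Global uniqueness follows either from the uniqueness at each step of the construction or directly: two solutions $u_1,u_2$ have difference solving the heat equation with right-hand side $f_\epsilon(u_1)-f_\epsilon(u_2)$ and zero data, and a Gr\"onwall estimate on $\sup_\Omega|u_1-u_2|(\cdot,t)$ forces $u_1\equiv u_2$.

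I expect the only genuine subtlety to be the continuation step: one must notice that, because $f_\epsilon$ is globally rather than merely locally Lipschitz, the contraction time $\tau$ does not shrink as the solution grows, so finitely many iterations cover $[0,T]$. Everything else is routine linear parabolic theory plus a Schauder bootstrap. Alternatively one could run the monotone iteration / sub- and supersolution scheme referenced in the statement — taking $\underline u,\overline u$ to be the solutions of the heat equation with right-hand sides $0$ and $1$ and iterating the map $u\mapsto$ (solution with forcing $f_\epsilon(u)+Ku$) for $K$ large so that $s\mapsto f_\epsilon(s)+Ks$ is increasing — but the contraction argument is shorter since $f_\epsilon$ is Lipschitz.
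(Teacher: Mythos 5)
Your proposal is correct and follows exactly the fixed-point route that the paper itself cites as one of the two standard ways to prove this lemma (the paper gives no details, merely pointing to the contraction/fixed-point argument and to the sub- and supersolution method). Your contraction estimate, uniform continuation time, Schauder bootstrap, and Gr\"onwall uniqueness argument are all sound, so nothing further is needed.
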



We now prove the existence of solutions to \eqref{e:main}. 
\begin{theorem} \label{t:e}
 Let $\Omega\subset \mathbb{R}^n$ be a smooth bounded domain, and let $\psi$ be smooth. There exists a solution $u$ to 
 \[
 \begin{cases}
  u_t - \Delta u = \chi_{\{u>0\}} \quad \text{ in } \Omega \times (0,T) \\
  u=\psi \quad \text{ on } (\Omega \times \{0\})\cup (\partial \Omega \times (0,T))
 \end{cases}
 \]
\end{theorem}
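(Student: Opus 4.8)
The plan is to obtain $u$ as a limit of the regularized solutions $u^{\epsilon}$ furnished by Lemma~\ref{l:eandu}, with $f_{\epsilon}$ the ramp in \eqref{e:rhs}. Fix the smooth domain $\Omega$ and the smooth data $\psi$. Since $0\le f_{\epsilon}\le 1$, the comparison principle traps each $u^{\epsilon}$ between the solution of the heat equation with right-hand side $0$ and the one with right-hand side $1$ (both with data $\psi$), giving $\|u^{\epsilon}\|_{L^{\infty}}\le C$ uniformly in $\epsilon$. Inserting this into the a priori estimates of Section~\ref{s:er} — \eqref{e:lp}, \eqref{e:holderu}, \eqref{e:holdergrad} and Lemma~\ref{l:holderspace}, in their up-to-the-boundary form since $\psi$ and $\partial\Omega$ are smooth — yields bounds, uniform in $\epsilon$, for $u^{\epsilon}$ in $C^{\alpha,\alpha/2}$, for $\nabla u^{\epsilon}$ in $C^{\alpha,\alpha/2}$, and for $u^{\epsilon}_{t}$ and $D^{2}u^{\epsilon}$ in $L^{p}$, for every $\alpha<1$ and $p<\infty$.

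Next I would pass to the limit along a subsequence $\epsilon_{k}\to 0$: by Arzel\`a--Ascoli $u^{\epsilon_{k}}\to u$ and $\nabla u^{\epsilon_{k}}\to\nabla u$ uniformly on $\overline{\Omega\times(0,T)}$, while $u^{\epsilon_{k}}_{t}\rightharpoonup u_{t}$ and $D^{2}u^{\epsilon_{k}}\rightharpoonup D^{2}u$ weakly in $L^{p}$, so $u\in W^{2,1}_{p}$ is H\"older continuous and attains $\psi$ on $\partial_{p}(\Omega\times(0,T))$. Since $0\le f_{\epsilon_{k}}(u^{\epsilon_{k}})\le 1$, a further subsequence gives $f_{\epsilon_{k}}(u^{\epsilon_{k}})\rightharpoonup g$ weakly in $L^{p}$ with $0\le g\le 1$, and letting $k\to\infty$ in $u^{\epsilon_{k}}_{t}-\Delta u^{\epsilon_{k}}=f_{\epsilon_{k}}(u^{\epsilon_{k}})$ gives $u_{t}-\Delta u=g$ a.e.

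It remains to show $g=\chi_{\{u>0\}}$ a.e., and \textbf{the identification on the coincidence set is the main obstacle}. On the open set $\{u>0\}$ every compact $K$ has $\min_{K}u>0$, so uniform convergence forces $u^{\epsilon_{k}}>\epsilon_{k}$ on $K$ for $k$ large, whence $f_{\epsilon_{k}}(u^{\epsilon_{k}})\equiv 1$ there and $g=1$ a.e. on $\{u>0\}$; symmetrically $g=0$ a.e. on $\{u<0\}$. On $\{u=0\}$ one has no pointwise control of the nonlinearity, and here the strong $W^{2,1}_{p}$ regularity of the limit is decisive: the derivatives $\nabla u$, $D^{2}u$, $u_{t}$ of a $W^{2,1}_{p}$ function vanish a.e. on the level set $\{u=0\}$, so $u_{t}-\Delta u=0$ a.e. there, i.e. $g=0$ a.e. on $\{u=0\}$. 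Combining the three cases gives $g=\chi_{\{u>0\}}$ a.e., so $u$ solves \eqref{e:main} with the prescribed data.

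Finally, to see this construction produces the \emph{least} solution referred to in Example~\ref{ex:last}: if $\epsilon\ge\epsilon'$ then $f_{\epsilon}\le f_{\epsilon'}$ pointwise and each $f_{\epsilon}$ is nondecreasing, so comparison gives $u^{\epsilon}\le u^{\epsilon'}$; hence $u^{\epsilon}$ is monotone and the full family — not just a subsequence — converges to $u$. Moreover, if $v$ is any solution with the same data, then on the open set $\{u^{\epsilon}-v>\epsilon\}$ one has, in every case for the sign of $v$, that $u^{\epsilon}>\epsilon$ (so $f_{\epsilon}(u^{\epsilon})=1$) while $\chi_{\{v>0\}}\le 1$, and $\chi_{\{v>0\}}=0$ wherever $v\le 0$; thus $(u^{\epsilon}-v)_{t}-\Delta(u^{\epsilon}-v)=f_{\epsilon}(u^{\epsilon})-\chi_{\{v>0\}}\ge 0$ there. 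Since $u^{\epsilon}-v=0$ on the parabolic boundary, the maximum principle gives $u^{\epsilon}\le v+\epsilon$, and letting $\epsilon\to 0$ yields $u\le v$.
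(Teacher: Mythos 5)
Your existence argument is correct and follows the same regularize--and--pass--to--the--limit skeleton as the paper, but you identify the right-hand side on the coincidence set by a different mechanism. The paper exploits the monotonicity $u^{\epsilon}\nearrow u$ (which you also derive, but only use at the end): since $u^{\epsilon}\le u$, one has $u^{\epsilon}(x,t)\le 0$ wherever $u(x,t)\le 0$, hence $f_{\epsilon}(u^{\epsilon})\to 0=\chi_{\{u>0\}}$ pointwise on all of $\{u\le 0\}$, and no separate treatment of $\{u=0\}$ is required. You instead invoke the measure-theoretic fact that the derivatives of a Sobolev function vanish a.e.\ on its level sets (applied first to $u$ to kill $u_t$ and $\nabla u$, then to each $\partial_i u$ to kill $D^2u$), so that $u_t-\Delta u=0=\chi_{\{u>0\}}$ a.e.\ on $\{u=0\}$. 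Both are valid; the paper's route is shorter because the monotone structure is available anyway, while yours is more robust in that it does not depend on the approximation being one-sided.

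One genuine error, though it affects only your supplementary least-solution claim and not the existence statement the theorem actually asserts: in the final paragraph the maximum principle is applied with the wrong sign. On $\{u^{\epsilon}-v>\epsilon\}$ you establish $(u^{\epsilon}-v)_t-\Delta(u^{\epsilon}-v)\ge 0$, which makes $u^{\epsilon}-v$ a \emph{supersolution} and hence bounds it from \emph{below} by its parabolic boundary values; it cannot yield $u^{\epsilon}\le v+\epsilon$. (Moreover, when $v<0$ the inequality $u^{\epsilon}-v>\epsilon$ does not force $u^{\epsilon}>\epsilon$, so the case analysis as written also fails.) The correct and simpler route is to note that $f_{\epsilon}(s)\le\chi_{\{s>0\}}$ for every $s$, so any solution $v$ of the original problem satisfies $v_t-\Delta v=\chi_{\{v>0\}}\ge f_{\epsilon}(v)$ and is therefore a supersolution of the Lipschitz problem \eqref{e:epsilon} with the same data; the comparison principle for Lipschitz nonlinearities then gives $u^{\epsilon}\le v$ directly, and $\epsilon\to 0$ yields $u\le v$.
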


\begin{proof}
 Let $u_{\epsilon}$ be the solution to \eqref{e:epsilon}. Letting $\epsilon \to 0$, there exists a limiting function $u$ with 
 \[
 \begin{aligned}
  &u^{\epsilon} \to u_0 \quad \text{ in } C^{\alpha}( \Omega \times (0,T))\\
  &\nabla u^{\epsilon} \to \nabla u_0 \quad \text{ in } C^{\alpha,\alpha/2}( \Omega \times (0,T))\\
  &u_t^{\epsilon}, \ D^2 u^{\epsilon} \rightharpoonup \partial_t u_0, \ D^2 u_0 \quad \text{ in } L^p( \Omega \times (0,T)). 
 \end{aligned}
 \]
 If $u(x,t)>0$, then $u^{\epsilon}>0$ in $Q_r(x,t)$ for some $r>0$, and it follows that $\lim_{\epsilon \to 0} f_{\epsilon}(u^{\epsilon}(x,t))=f(u(x,t))$. 
 Since $u^{\epsilon_2}$ is a subsolution with right hand side $f_{\epsilon_1}$, it follows that   $u^{\epsilon_2} \leq u^{\epsilon_1}$ for $\epsilon_1 \leq \epsilon_2$ and that $u^{\epsilon} \nearrow u$. If $u(x,t)\leq 0$, then $u^{\epsilon}(x,t) \leq 0$ for all $\epsilon>0$. So, for all points, 
\[
\lim_{\epsilon \to 0} f^{\epsilon}(u_{\epsilon}) = f(u) 
\]
If $\phi \in C_0^{\infty}(\Omega \times (0,T))$, then from weak convergence in $L^P$, we have 
 \[
\lim_{\epsilon \to 0} \int_{\Omega}\int_0^T (\partial_t u^{\epsilon}- \Delta u^{\epsilon}) \phi = \int_{\Omega}\int_0^T (\partial_t u- \Delta u) \phi. 
 \]
 Then 
 \[
 \begin{aligned}
 f(u)&= \lim_{\epsilon \to 0} f_{\epsilon}(u_{\epsilon})\\
 &= \lim_{\epsilon \to 0} \int_{\Omega}\int_0^T (\partial_t u_{\epsilon}- \Delta u_{\epsilon}) \phi \\
 &= \int_{\Omega}\int_0^T (\partial_t u- \Delta u) \phi
 \end{aligned}
 \]
 Since $\phi$ is arbitrary it follows that $u$ is a strong solution and satisfies the equation for almost every $(x,t) \in \Omega \times (0,T)$. 
\end{proof}

We now show that for the least solutions constructed above that the solution inherits the interior bound $u_t \geq c$ from the boundary. This result shows that solutions with $u_t \geq c$ exist. 
\begin{proposition} \label{p:increase}
 Assume that $\partial_t \psi \geq c$. Then $u(x,t_2)-u(x,t_1)\geq c (t_2-t_1)$ and $\partial_t u(x,t)\geq c$ whenever $\partial_t u$ exists. 
\end{proposition}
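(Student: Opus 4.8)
The strategy is to prove the difference‑quotient inequality for the regularized solutions $u^\epsilon$ of \eqref{e:epsilon} and then pass to the limit; both conclusions of the proposition follow from this. I would begin with two reductions. First, the derivative bound is a formal consequence of the difference‑quotient bound: if $u(x,t_2)-u(x,t_1)\ge c(t_2-t_1)$ for all $t_1<t_2$, then at any $(x,t)$ where $\partial_t u$ exists, letting $h\to0^+$ in $h^{-1}\big(u(x,t+h)-u(x,t)\big)\ge c$ gives $\partial_t u(x,t)\ge c$. Second, the least solution $u$ is the increasing, locally uniform limit of the classical solutions $u^\epsilon$ of \eqref{e:epsilon} (Theorem \ref{t:e}), and the pointwise inequality $u^\epsilon(x,t_2)-u^\epsilon(x,t_1)\ge c(t_2-t_1)$ survives this limit; so it suffices to fix $\epsilon$ and prove $u^\epsilon(x,t+h)-u^\epsilon(x,t)\ge ch$ for $h>0$.

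For fixed $\epsilon$ and $h>0$ I would compare $u^\epsilon$ with its shift $v(x,t):=u^\epsilon(x,t+h)-ch$. Since $f_\epsilon$ is nondecreasing and $ch\ge0$,
\[
 v_t-\Delta v=f_\epsilon\big(u^\epsilon(x,t+h)\big)=f_\epsilon(v+ch)\ge f_\epsilon(v),
\]
so $v$ is a supersolution, and $u^\epsilon$ a solution, of the Lipschitz semilinear equation $w_t-\Delta w=f_\epsilon(w)$; on the lateral boundary $v=\psi(\cdot,\cdot+h)-ch\ge\psi=u^\epsilon$ by the hypothesis $\partial_t\psi\ge c$. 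Granting the analogous inequality on the initial slice, the comparison principle for this equation — standard after the substitution $\widetilde w:=e^{-Lt}w$ with $L=\operatorname{Lip}(f_\epsilon)=1/\epsilon$, which fixes the sign of the zeroth‑order term — gives $v\ge u^\epsilon$, i.e. $u^\epsilon(x,t+h)-u^\epsilon(x,t)\ge ch$. Equivalently, one may differentiate \eqref{e:epsilon} in $t$: $w:=\partial_t u^\epsilon$ solves the linear equation $w_t-\Delta w=f_\epsilon'(u^\epsilon)\,w$ with $0\le f_\epsilon'\le1/\epsilon$ and $w=\partial_t\psi\ge c$ on the parabolic boundary, and after the same exponential shift the weak minimum principle yields $w\ge c$; integrating in $t$ recovers the difference‑quotient form.

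The point that genuinely requires care — and that I expect to be the main obstacle — is the behavior at the initial time $t=0$. In the shift argument the inequality needed on $\Omega\times\{0\}$ is $u^\epsilon(x,h)-ch\ge\psi(x,0)$, which is itself the $t_1=0$ case of the conclusion and so cannot be read off the data; in the differentiation argument the same issue reappears as the need for $\partial_t u^\epsilon(\cdot,0)\ge c$, and indeed $\partial_t u^\epsilon(\cdot,0)=\Delta_x\psi(\cdot,0)+f_\epsilon(\psi(\cdot,0))$ need not dominate $c$ for arbitrary smooth $\psi$. I would therefore invoke the compatibility of the smooth data at the corner $\partial\Omega\times\{0\}$, which makes $u^\epsilon$ smooth up to $t=0$ and identifies $\partial_t u^\epsilon(\cdot,0)$ with $\partial_t\psi(\cdot,0)\ge c$, so that $w\ge c$ on the full parabolic boundary and the minimum principle closes the argument. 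If one prefers to work under the bare hypothesis $\partial_t\psi\ge c$, then both arguments above run verbatim on $\Omega\times(\delta,T)$ and yield $u(x,t_2)-u(x,t_1)\ge c(t_2-t_1)$ for all interior times $\delta\le t_1<t_2$, which is exactly what is used in the proof of Theorem \ref{t:main}.
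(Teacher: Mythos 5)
Your proposal is correct and follows essentially the same route as the paper: the paper also differentiates the regularized equation \eqref{e:epsilon} in $t$, notes that $w=\partial_t u_\epsilon$ satisfies a linear parabolic equation with zeroth-order term of favorable sign and boundary data $\psi_t\ge c$, applies the minimum principle to get $\partial_t u_\epsilon\ge c$, integrates, and passes to the limit; your sliding comparison of $u^\epsilon(\cdot,\cdot+h)-ch$ against $u^\epsilon$ is just the integrated form of the same argument. The one point where you go beyond the paper is the initial slice: the paper simply asserts $\partial_t u_\epsilon=\psi_t$ on $\Omega\times\{0\}$, whereas, as you observe, $\partial_t u_\epsilon(\cdot,0^+)$ is forced by the equation to equal $\Delta_x\psi(\cdot,0)+f_\epsilon(\psi(\cdot,0))$, so the boundary inequality there requires either the compatibility of the smooth data at the corner or restriction to times $t\ge\delta>0$; your patch is the correct repair of a step the paper glosses over.
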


\begin{proof}
 Let $u_{\epsilon}$ be the solution to \eqref{e:epsilon}. Then 
 \[
 \begin{cases}
  \partial_t (\partial_t u_{\epsilon}) - \Delta (\partial_t u_{\epsilon}) = f_{\epsilon}'(u_{\epsilon}) (\partial_t u_{\epsilon})= f_{\epsilon}'(u_{\epsilon}) (\partial_t u_{\epsilon})^+ \quad \text{ in } \Omega \times (0,T) \\
  \partial_t u_{\epsilon}=\psi_t \quad \text{ on } (\Omega \times \{0\})\cup (\partial \Omega \times (0,T)). 
 \end{cases}
 \]
 Since $\psi_t \geq c>0$ and since $f_{\epsilon}'(u_{\epsilon}) (\partial_t u_{\epsilon})^+ \geq 0$ it follows from the minimum principle that $\partial_t u_{\epsilon} \geq c$. Then $u_{\epsilon}(x,t_2)-u_{\epsilon}(x,t_1)\geq c(t_2-t_1)$. From the H\"older convergence, it follows that $u(x,t_2)-u(x,t_1)\geq c(t_2-t_1)$. Then also $\partial_t u \geq c$ whenever the derivative exists.  
\end{proof}

\section{Free Boundary Regularity} \label{s:regularity}
 In this section we assume that we have a solution to \eqref{e:main}, and that 
 \begin{equation} \label{e:lip}
 u(x,t_2)-u(x,t_1) \geq c(t_2 - t_1).
 \end{equation}
 Proposition \ref{p:increase} shows that \eqref{e:lip} will be true for the least solutions as long as the boundary data satisfies the same condition. This assumption is natural for the applications in combustion when assuming that the heat is advancing. 

 \begin{lemma} \label{l:lipschitz}
   Assume that $u$ is a solution to \eqref{e:main} in $Q_1$ and satisfies \eqref{e:lip}. Then the free boundary $\Gamma$ is a Lipschitz graph over the spatial variable $x$.
 \end{lemma}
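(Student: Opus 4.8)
The plan is to realize $\Gamma$ as the graph of the function
\[
 g(x) := \inf\{\, t : u(x,t) > 0 \,\},
\]
defined for those $x$ whose vertical slice meets both $\{u>0\}$ and its complement, and to show that $g$ is Lipschitz with constant $M/c$, where $M := \sup_{Q_{1/2}} |\nabla u|$ is finite by \eqref{e:holdergrad} and $c>0$ is the constant in \eqref{e:lip}.

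First I would record the structure of the vertical slices forced by \eqref{e:lip}: for fixed $x$ the map $t \mapsto u(x,t)$ is continuous and strictly increasing at rate at least $c$, so $\{t : u(x,t)>0\}$ is an up-set, $u(x,g(x)) = 0$ by continuity together with the definition of the infimum, $u(x,t) > 0$ for $t > g(x)$, and $u(x,t) < 0$ for $t < g(x)$ (here the strict inequality in \eqref{e:lip} is essential). This immediately gives $\Gamma = \mathrm{graph}(g)$: if $(x_0,t_0) \in \partial\{u>0\}$ then $u(x_0,t_0)\le 0$ since $\{u>0\}$ is open, so $t_0 \le g(x_0)$, while $t_0 < g(x_0)$ would force $u<0$ on a whole neighborhood of $(x_0,t_0)$, contradicting $(x_0,t_0)\in\overline{\{u>0\}}$; hence $t_0 = g(x_0)$. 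Conversely every point $(x,g(x))$ lies in $\partial\{u>0\}$ because $u(x,g(x))=0$ while $u>0$ just above it.

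For the quantitative estimate, fix a free boundary point, pass to a spatial ball on which the gradient estimate \eqref{e:holdergrad} holds, and take $x,y$ in that ball. Set $t_1 := g(x)$, so $u(x,t_1) = 0$. Since each time-slice of the cylinder is convex in $x$, the gradient bound gives
\[
 u(y,t_1) \ge u(x,t_1) - M|x-y| = -M|x-y|.
\]
Combining this with \eqref{e:lip}, for every $t$ with $t - t_1 > \tfrac{M}{c}|x-y|$ we obtain $u(y,t) \ge u(y,t_1) + c(t-t_1) > 0$, and therefore $g(y) \le t_1 + \tfrac{M}{c}|x-y| = g(x) + \tfrac{M}{c}|x-y|$. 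Interchanging $x$ and $y$ yields the reverse bound, so $|g(x) - g(y)| \le \tfrac{M}{c}|x-y|$, which is the claim.

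The argument is essentially soft; the care needed is bookkeeping — verifying that $g$ is single-valued, which is exactly where \eqref{e:lip} is used with a strict sign rather than mere monotonicity, and handling slices that reach the top of the cylinder's time interval (one takes $t$ just above $g(x)$, which stays inside once one works in an interior neighborhood of the free boundary point). I do not anticipate a genuine obstacle; the content of the lemma is that the interior gradient estimate \eqref{e:holdergrad} converts the uniform vertical rate $c$ of \eqref{e:lip} into a uniform spatial Lipschitz constant for the free boundary.
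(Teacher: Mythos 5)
Your proposal is correct and follows essentially the same route as the paper: both arguments show $\Gamma$ is a graph in $t$ over $x$ using the strict monotonicity from \eqref{e:lip}, then combine the interior gradient bound $\sup_{Q_r}|\nabla u|\leq C_r$ with the rate $c$ of time-increase to get the Lipschitz constant $C_r/c$. Your write-up is merely more explicit about why the graph function is single-valued and why $\Gamma$ coincides with its graph, points the paper dispatches with ``it is clear.''
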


 \begin{proof}
     From \eqref{e:lip}, it is clear that $\Gamma$ is a graph over the spatial variable $x$, that is for $(x,t) \in \Gamma\cap Q_1$, there exists a function $H(x)$ such that $(x,t)=(x,H(x))$. 
     Recall from Lemma \ref{l:holderspace} the $L^{\infty}$ estimate
     \[
      \sup_{Q_{r}} |\nabla u(x,t)|\leq C_r \quad \text{ for any } r<1. 
     \]
    If $(x,t) \in \Gamma$, then 
    \[
     u(y,t)\geq -C_r |x-y|. 
    \]
    Also, if $s\geq t$, then 
    \[
    u(y,s)\geq c(s-t)- C_r|x-y|. 
    \]
    Thus, if $s-t\geq \frac{C_r}{c}|x-y|$, then $u(y,s)\geq 0$, so that 
    \[
      H(y)-H(x)\leq \frac{C_r}{c}|x-y|
    \]
    A similar estimate holds from below, so that we conclude 
    \[
    -\frac{C_r}{c}|x-y| \leq H(y)-H(x)\leq \frac{C_r}{c}|x-y|,
    \]
    and we have shown that $\Gamma$ is a Lipschitz graph over the spatial variable $x$ with a Lipschitz bound in the interior of $Q_1$.  
 \end{proof}
 
 We will prove that the space-time free boundary is $C^{1,\alpha}$. This will be accomplished in three steps. The first step is to show that locally near a point where $|\nabla u(x,t)|\neq 0$, the space-time free boundary is $C^{\infty}$. This will be done in Section \ref{s:pos}. The second step consists of showing that the space-time free boundary is differentiable at points where $|\nabla u(x,t)|= 0$. The third step is to piece together the estimates to show that the space-time free boundary is $C^{1,\alpha}$. The second and third steps will be accomplished in Section \ref{s:allholder}.

 \subsection{Regularity of $\Gamma$ when $|\nabla u(x,t)|>0$.} \label{s:pos}
 We first consider a point $(x,t) \in \Gamma$ at which $|\nabla u(x,t)|> 0$. By rotation we may assume that $\nabla u(x,t)=\partial_{x_n} u(x,t)$. We now rescale the function $u$. Fix $M>0$ large, and let $r$ satisfy
 \begin{equation} \label{e:r}
 r^{\alpha}=\frac{|\nabla u(x,t)|}{M}.
 \end{equation}
 We also define the rescaling
 \begin{equation} \label{e:ur}
 u_r(y,s):=\frac{u(ry+x,r^2s+t)}{|\nabla u(x,t)|r}=\frac{u(ry+x,r^2s+t)}{Mr^{1+\alpha}}. 
 \end{equation}
 We list some elementary properties for $u_r$ on $Q_1$.

 \begin{proposition} \label{p:rescale}
  Assume that $u$ is a solution to \eqref{e:main} in $Q_{R}(x,t)$ and satisfies \eqref{e:lip}. Fix $0<\alpha<1$, and let $r$ and $u_r$ be defined as in \eqref{e:r} and \eqref{e:ur} respectively. If $2r < R$, then after possible rotation $u_r$ satisfies the following properties in $Q_1$. 
  \begin{align}
     &\partial_n u_r(0,0)= \nabla u_r(0,0)=(0,\ldots,1) \label{e:q1}\\
     &1-1/M \leq |\nabla u(y,s)|\leq 1+1/M \label{e:q2}\\
     & |\partial_i u_r(y,s)|\leq 2/M \label{e:q3} \quad \text{ for } 1\leq i <n \\
     & |u_r(y,s)|\leq 1+2/M \label{e:q4} \\
     & (\partial_t - \Delta)u_r = r^{1-\alpha} \chi_{\{u_r>0\}}\label{e:q5}.
 \end{align}
 \end{proposition}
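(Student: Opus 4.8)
The plan is to verify each of the five properties directly from the definition of the rescaling \eqref{e:ur}, using the a priori estimates of Section \ref{s:er} together with the hypothesis $|\nabla u(x,t)|>0$ and the choice of $r$ in \eqref{e:r}. First I would compute the derivatives of $u_r$ in terms of those of $u$: differentiating \eqref{e:ur} gives $\nabla u_r(y,s) = \nabla u(ry+x, r^2 s+t)/|\nabla u(x,t)|$, so evaluating at $(0,0)$ immediately yields \eqref{e:q1} after the stated rotation that aligns $\nabla u(x,t)$ with $e_n$. Similarly $(\partial_t - \Delta) u_r(y,s) = r^2(\partial_t u - \Delta u)(ry+x,r^2 s+t)/(|\nabla u(x,t)|r) = (r/|\nabla u(x,t)|)\,\chi_{\{u_r>0\}} = r^{1-\alpha}\chi_{\{u_r>0\}}$ by \eqref{e:r}, which is \eqref{e:q5}; here one uses that $\{u_r>0\}$ is the image of $\{u>0\}$ under the affine change of variables so the characteristic functions match.

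Next I would establish \eqref{e:q2} and \eqref{e:q3}. The point is that $\nabla u_r(0,0) = e_n$ exactly, and by Lemma \ref{l:holderspace} the map $\nabla u$ — hence $\nabla u_r$ after rescaling — is $C^{\alpha,\alpha/2}$ with a controlled modulus. The scaling of the $C^{\alpha,\alpha/2}$ norm under \eqref{e:ur} is exactly what makes this work: the oscillation of $\nabla u_r$ over $Q_1$ is bounded by $C r^\alpha/|\nabla u(x,t)| = C/M$ by the definition \eqref{e:r}, so for $M$ large each component $\partial_i u_r$ with $i<n$ stays within $C/M$ of its value $0$ at the origin, and $\partial_n u_r$ stays within $C/M$ of $1$; absorbing the universal constant $C$ into the choice of $M$ (or, more honestly, stating the estimates with the constant from Lemma \ref{l:holderspace}) gives the bounds $|\partial_i u_r|\le 2/M$ and $1-1/M \le |\nabla u_r| \le 1+1/M$ on $Q_1$. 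I should be slightly careful that the rescaled function is defined on $Q_1$: this is where the hypothesis $2r<R$ enters, since $u_r$ is defined wherever $(ry+x, r^2 s+t)\in Q_R(x,t)$, and $Q_1$ in the $(y,s)$ variables maps into $Q_{2r}(x,t)\subset Q_R(x,t)$ (using $r<1$ so $r^2<r$), placing us comfortably inside the domain and inside the region where the interior estimate of Lemma \ref{l:holderspace} applies after a standard covering/scaling of that estimate.

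Finally, \eqref{e:q4} follows by integrating the gradient bound from a free boundary point. Since $(x,t)\in\Gamma$, we have $u_r(0,0)=0$; then for any $(y,s)\in Q_1$ with $s\ge 0$ we get $0 \le u_r(y,s) \le u_r(y,s)-u_r(0,0)$, and writing the difference as an integral along a path, or applying the $C^{\alpha,\alpha/2}$ bound on $u_r$ together with \eqref{e:q2}, gives $|u_r(y,s)| \le \sup|\nabla u_r|\cdot|y| + |u_r(0,s)| \le (1+1/M)\cdot 1 + (\text{time contribution})$. The time contribution is handled by \eqref{e:lip} and \eqref{e:q5}: the equation controls $\partial_t u_r$ only in $L^p$, but \eqref{e:lip} gives the one-sided bound $u_r(0,s)-u_r(0,0)\ge c r^{1-\alpha} s / (\text{const})$, and an analogous upper bound for $u_r$ in time comes from comparing with the solution of the heat equation with bounded right-hand side $r^{1-\alpha}\le 1$; this contributes at most a further $1/M$-sized term on $Q_1$, yielding $|u_r|\le 1+2/M$. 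The main obstacle, and the step deserving the most care, is precisely making the constants in \eqref{e:q2}–\eqref{e:q4} come out as the clean numbers $1\pm 1/M$, $2/M$, $1+2/M$ rather than $C/M$ for a universal $C$: this requires either (i) absorbing Lemma \ref{l:holderspace}'s constant by further shrinking the admissible range of $r$ (equivalently enlarging $M$) so that $Cr^\alpha/|\nabla u(x,t)| \le 1/M$ — i.e. replacing \eqref{e:r} by the requirement $r^\alpha = |\nabla u(x,t)|/(CM)$ — or (ii) observing that only the qualitative smallness "$\to 0$ as $M\to\infty$" is used in the sequel, so the exact constants are cosmetic. I would flag this bookkeeping explicitly and otherwise treat the verification as routine.
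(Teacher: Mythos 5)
Your proposal is correct and follows essentially the same route as the paper: differentiate the rescaling directly for \eqref{e:q1} and \eqref{e:q5}, use the $C^{\alpha,\alpha/2}$ gradient estimate of Lemma \ref{l:holderspace} together with the choice $r^{\alpha}=|\nabla u(x,t)|/M$ for \eqref{e:q2}--\eqref{e:q3}, and combine the gradient bound with a time-increment estimate at the free boundary point for \eqref{e:q4}; the constant bookkeeping you flag is handled in the paper simply by normalizing the relevant H\"older norms to $1$. Note only that your computation for \eqref{e:q5} actually yields $(r/|\nabla u(x,t)|)\chi_{\{u_r>0\}}=(r^{1-\alpha}/M)\chi_{\{u_r>0\}}$ rather than $r^{1-\alpha}\chi_{\{u_r>0\}}$ --- the same harmless factor of $1/M$ is dropped in the statement itself, and only the smallness of the right-hand side is used later.
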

 
 \begin{proof}
  Properties \eqref{e:q1}, \eqref{e:q2}, and \eqref{e:q3} come from the standard scaling and the fact that $\| \nabla u \|_{C^{\alpha,\alpha/2}}(Q_1) \leq 1$. Since also 
  $\| u \|_{C^{\alpha}(Q_1)} \leq 1$ it follows that 
  \[
  |u_r(0,s)|=|u_r(0,s)-u_r(0,0)|=\frac{|u(x,r^2s+t) -u(x,t)|}{Mr^{1+\alpha}}
  \leq \frac{1+\alpha}{M(1+\alpha)} \leq M^{-1}. 
  \]
  Combined with the gradient estimate \eqref{e:q2} we obtain \eqref{e:q4}. 
  Finally, from rescaling we obtain \eqref{e:q5}. 
 \end{proof}

   For $|\nabla u(x,t)|>0$ we rotate and rescale with $u_r$ to obtain the properties in Proposition \ref{p:rescale}. For notational convenience, throughout the remainder of this subsection we will write $u_r$ as $u$ in the following discussion. Thus, we assume that $u$ satisfies all the properties in Proposition \ref{p:rescale}. We perform the Hodograph transform by letting $v(x',x_n,t)$ be the unique function defined by 
 \begin{equation} \label{e:hodograph}
     u(x',v(x',x_n,t),t)=x_n. 
 \end{equation}
 By the inverse function theorem, we have that $v$ exists and has the same regularity as $u$. The derivatives of $v$ can be computed as follows 
 \[
 \begin{aligned}
     &u_n v_n = 1 \\
     &u_i + u_n v_i =0 \\
     &u_t + u_n v_t =0 \\
     &u_{nn} v_n^2 + u_n v_{nn} = 0\\
     &u_{in} v_n + u_{nn} v_i v_n + u_n v_{in}=0 \\
     &u_{ii}+2u_{ni}v_i + u_{nn} v_i^2 + u_n v_{ii}=0. 
 \end{aligned}
 \]
 As shown in \cite{aks25}, the term $\Delta u$ can be rewritten as a divergence form elliptic operator for $v$. Thus, $v$ is a strong solution to the equation 
 \[
  -\frac{v_t}{v_n}- \left[\left(1+ \sum_i v_i^2 \right)\frac{1}{2v_n^2} \right]_n
  +\sum_i \left[\frac{v_i}{v_n} \right]_i= r^{1-\alpha} \chi_{\{x_n >0\}}. 
 \]
To illustrate the method, we formally differentiate the above equation in $t$ to obtain the following equation for $v_t$ 
\[
\begin{aligned}
&-\frac{v_n (v_{t})_t-v_t (v_t)_n}{v_n^2}
-\left[\sum_i \frac{v_i(v_t)_i}{v_n^2}- \left(1+ \sum_i v_i^2 \right)\frac{(v_{t})_n}{v_n^3} \right]_n \\
&\quad + \sum_i \left[\frac{v_n (v_t)_i - v_i (v_t)_n}{v_n^2} \right]_i =0. 
\end{aligned}
\]
Then formally, $w=v_t$ solves the following divergence-form parabolic equation 
\begin{equation} \label{e:formal}
\int_U \frac{1}{v_n} w_t \psi + A^{ij}(\nabla v) w_i \psi_j -\frac{v_t}{v_n^2}w_n \psi =0
\end{equation}
for almost every $t$ and for $\psi \in W_0^{1,2}(U)$. The matrix $A^{ij}(\nabla v)$ is given specifically as 
\[
	A^{ij}(\nabla v) =  \begin{bmatrix}
		\frac{1}{v_n} & 0 & \cdots & 0 & -\frac{v_1}{v_n^2} \\
		0 & \frac{1}{v_n} & \cdots & 0 & -\frac{v_2}{v_n^2} \\
		\vdots & \vdots & &\vdots & \vdots \\
		0 & 0 & \cdots & \frac{1}{v_n} &-\frac{v_{n-1}}{v_n^2} \\
		-\frac{v_1}{v_n^2} & -\frac{v_2}{v_n^2} & \cdots & -\frac{v_{n-1}}{v_n^2} & \frac{1}{v_n^3}(1 + \sum_i v_i^2)
	\end{bmatrix}
\]
From Proposition \ref{p:rescale} and the relations for the derivatives of $u$ and $v$, the above matrix will be uniformly elliptic on $Q_1$. By Nash's theorem (see Theorem 6.28 in \cite{l96}) we have that $ v_t= w \in C^{0,\gamma}$ for some $\gamma>0$. But then the coefficients of \eqref{e:formal} are H\"older continuous. Then by Theorem 6.45 in \cite{l96}, the spatial derivatives of $w$ are H\"older continuous and $w=v_t \in C^{(1+\gamma)/2}$, so by bootstrapping we obtain $w=v_t \in C^{(1+\alpha)/2}$ for any $0<\alpha<1$. 

\begin{remark}
    The above arguments can be iterated for both the time direction $t$ as well as $i<n$ (see for instance \cite{ks24}) to conclude that $\Gamma \in C^{\infty}$ in a neighborhood of a point $(x,t)$ where $|\nabla u(x,t)|>0$. Since we do not need this higher regularity in this paper, and since $\Gamma$ is not necessarily $C^{\infty}$ everywhere, we do not provide the details.  
\end{remark}

 The above computations were done formally. Consequently, we instead consider the equation solved by the difference quotient $w_h(x,t):= (v(x,t+h)-v(x,t))/h$. 

\begin{theorem} \label{t:hodograph}
 Assume that $u$ satisfies the properties in Proposition \ref{p:rescale}, and let $v$ be the Hodograph transform in the $x_n$ variable. For any $0<\alpha<1$, there exists a constant $C_1$ and an $r_1>0$ such that the difference quotient $w_h(x,t):= (v(x,t+h)-v(x,t))/h \in C^{(1+\alpha)/2}$ and 
 \[
  \| w_h \|_{C^{(1+\alpha)/2}(Q_r)} \leq C_1 \| w_h \|_{L^p(Q_{2r})} \quad \text{ independently of } h. 
 \]
 Consequently, the same estimate holds for $w=v_t$, and thus there exists a constant $C_2$ and
 such that 
 \begin{equation} \label{e:utholder}
  \| u_t \|_{C^{\alpha}(Q_{1/2})} \leq C_2 \sup_{Q_{1}} |u_t|. 
 \end{equation}
\end{theorem}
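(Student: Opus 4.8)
The plan is to make the formal computation preceding the statement rigorous by differentiating the $v$-equation through difference quotients rather than classical derivatives, and then run a De Giorgi--Nash--Moser/Schauder bootstrap that is uniform in the increment $h$. The crucial structural observation is that the forcing term $r^{1-\alpha}\chi_{\{x_n>0\}}$ in the divergence-form equation for $v$ is independent of $t$, so subtracting the equation at times $t$ and $t+h$ and dividing by $h$ produces an equation for $w_h$ with \emph{zero} right-hand side. Writing each nonlinear term as an integral of its gradient along the segment joining $\nabla v(\cdot,t)$ to $\nabla v(\cdot,t+h)$ (and treating the $v_t/v_n$ term similarly), one checks that $w_h$ is a weak solution in $Q_1$ of a linear divergence-form parabolic equation
\[
 \int_U \tfrac{1}{\bar v_n}\,(w_h)_t\,\psi + \bar A^{ij}\,(w_h)_i\,\psi_j - \tfrac{\overline{v_t}}{\bar v_n^{\,2}}\,(w_h)_n\,\psi = 0,
\]
whose coefficients are measurable functions of $\nabla v(\cdot,t)$, $\nabla v(\cdot,t+h)$ and $v_t$, of exactly the shape in \eqref{e:formal}. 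By Proposition \ref{p:rescale} and the Hodograph relations (in particular $u_n\in[1-1/M,1+1/M]$ and the smallness of $\partial_i u$ for $i<n$), choosing $M$ fixed large makes these coefficients uniformly bounded and the principal matrix $\bar A^{ij}$ uniformly elliptic on $Q_1$, with constants independent of $h$; the first-order coefficient $\overline{v_t}/\bar v_n^{\,2}$ is controlled in $L^\infty$ by $\sup_{Q_1}|v_t|\simeq\sup_{Q_1}|u_t|$ via $v_t=-u_t/u_n$.

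Granting this, the bootstrap proceeds in three stages. \emph{(i)} Parabolic De Giorgi--Nash--Moser theory (Theorem 6.28 in \cite{l96}) supplies $\gamma_0\in(0,1)$ and a constant depending only on the ellipticity and the $L^\infty$ bound of the coefficients, hence ultimately on $n$, $M$ and $\sup_{Q_1}|u_t|$ but \emph{not} on $h$, so that $\|w_h\|_{C^{\gamma_0,\gamma_0/2}(Q_\rho)}\le C\|w_h\|_{L^p(Q_{2\rho})}$ for a suitable $\rho<1/2$. \emph{(ii)} Since $u$ satisfies \eqref{e:q2} and $\|\nabla u\|_{C^{\alpha,\alpha/2}(Q_1)}\le1$, the inverse function theorem gives $v\in C^{1,\alpha}$ in the spatial variables with a uniform bound; combining this spatial estimate with $\sup|v_t|<\infty$ through the standard parabolic interpolation inequality upgrades $\nabla v$ to $C^{\alpha,\alpha/2}(Q_1)$ in the full parabolic metric. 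Consequently the coefficients $\bar A^{ij}$, $1/\bar v_n$ and $\overline{v_t}/\bar v_n^{\,2}$ lie in $C^{\beta,\beta/2}$ with $\beta=\min\{\alpha,\gamma_0\}$, uniformly in $h$. \emph{(iii)} Divergence-form parabolic Schauder estimates (Theorem 6.45 in \cite{l96}), applied as in the discussion of \eqref{e:formal}, then give $\nabla w_h\in C^{\beta,\beta/2}$ and hence $w_h\in C^{(1+\beta)/2}$; feeding the improved regularity of $v_t$ back into the coefficients and iterating raises the exponent until it is capped by the spatial regularity $C^{1,\alpha}$ of $u$, yielding $\|w_h\|_{C^{(1+\alpha)/2}(Q_r)}\le C_1\|w_h\|_{L^p(Q_{2r})}$ with $C_1$ and $r$ independent of $h$.

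Finally I would let $h\to0$: the uniform $C^{(1+\alpha)/2}$ bound and the Arzel\`a--Ascoli theorem force $w_h\to v_t$ locally uniformly (the limit is $v_t$ since $w_h\to v_t$ in $L^1_{\mathrm{loc}}$), so $v_t$ inherits $\|v_t\|_{C^{(1+\alpha)/2}(Q_r)}\le C_1\|v_t\|_{L^p(Q_{2r})}\le C_1'\sup_{Q_1}|v_t|$. Translating back through $u_t=-u_n v_t$, using that $u_n\in C^{\alpha}(Q_{1/2})$ is bounded away from zero by \eqref{e:q2} so that products and quotients remain in $C^\alpha$, and covering $Q_{1/2}$ by finitely many cylinders on which the previous estimates apply, gives $\|u_t\|_{C^{\alpha}(Q_{1/2})}\le C_2\sup_{Q_1}|u_t|$.

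The main obstacle is concentrated in the first two stages. First, one must verify rigorously that the difference quotient of the merely strong solution $v$ is a genuine weak solution of the advertised divergence-form equation, that the forcing truly cancels, and that the ellipticity constants do not degenerate as $h\to0$ (which is where $M$ large is used). Second, and more essentially, the interpolation step \emph{(ii)} that converts the \emph{elliptic-type} spatial regularity $v\in C^{1,\alpha}_x$ into genuine \emph{parabolic} H\"older regularity of $\nabla v$, hence of the coefficients, is the linchpin: it is exactly here that the boundedness $u_t\in L^\infty(Q_1)$ is indispensable and what allows the Schauder bootstrap to start rather than stall at the bare $C^{\gamma_0}$ level.
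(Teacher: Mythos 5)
Your proposal follows essentially the same route as the paper: form the difference quotient $w_h$, observe it solves a divergence-form parabolic equation with time-averaged coefficients that are uniformly elliptic by Proposition \ref{p:rescale}, apply Nash/De Giorgi to get a uniform $C^{0,\gamma}$ bound, pass to the limit $h\to 0$, and then bootstrap via Schauder theory (Theorem 6.45 in \cite{l96}) once the coefficients are known to be H\"older, finally transferring back to $u_t$ by a covering argument. The only cosmetic difference is that your interpolation step (ii) is unnecessary, since Lemma \ref{l:holderspace} already gives $\nabla u\in C^{\alpha,\alpha/2}$ in the parabolic metric and hence the same for $\nabla v$ through the Hodograph relations.
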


\begin{proof}
Throughout the proof we will suppress the dependence on the variable $x$, and only write the dependence on time $t$. The quotient $w_h$ satisfies the equation
\[
\int_{B_1} \frac{1}{v_n(t+h)} [w_h]_t \psi 
+ A^{ij}_h(t) [w_h]_i \psi_j 
- \frac{v_t(t)}{v_n(t+h)v_n(t)} [w_h]_n \psi=0, 
\]
where $A^{ij}_h$ is symmetric and 
\[
 \begin{aligned}
 A_h^{ij}&= \frac{\delta_{ij}}{v_n(t+h)}  &\quad \text{ if } 1\leq i,j <n, \\
 A_h^{in}&=A_h^{ni}=  -\frac{v_i(t)}{v_n(t)v_n(t+h)} &\quad \text{ if } 1\leq i <n, \\
 A_h^{nn}&= \frac{v_n(t+h)+v_n(t)}{2v_n^2(t)v_n^2(t+h)}(1 + \sum_i v_i^2(t)).
 \end{aligned}
\]
As explained in the formal discussion above, the above matrix is uniformly elliptic. From \eqref{e:q4} we have that $\| u_t \|_{L^p(Q_{1-M^{-1}})} \leq C(p)$. From the relations between the derivatives of $u$ and $v$, we have that $v,v_t \in L^p$ for all $1\leq p < \infty$. It then follows that $\| w_h \|_{L^p} \leq C(p)$, see for instance Theorem 3 in Section 5.8.2 in \cite{e10}. As $w_h$ satisfies a linear equation, we obtain a priori interior Sobolev estimates for $w_h$. Also, from Nash's Theorem, we have that $w_h \in C^{0,\gamma}$ uniformly for some $\gamma>0$. We can then let $h\to 0$, and we obtain that indeed $w=v_t$ solves the original formal equation. Thus, from the discussion above for the formal equation, the H\"older estimates for $v$ transfer to $u$. A covering argument and compactness of $\overline{Q}_{1/2}$, then gives \eqref{e:utholder}.
\end{proof}

\subsection{H\"older regularity of all of $\Gamma$} \label{s:allholder}
We now show that $\Gamma$ is differentiable when the spatial gradient vanishes.
\begin{lemma} \label{l:zeroder}
 Assume that $(x,t) \in \Gamma$ and $|\nabla_x u(x,t)|=0$. Then $\Gamma$ is differentiable at $(x,t)$ and $|\nabla \Gamma(x,t)|=0$. 
\end{lemma}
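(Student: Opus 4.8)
The plan is to show that the graph function $H$ from Lemma~\ref{l:lipschitz} is differentiable at $x$ with $\nabla H(x)=0$, which is equivalent to the statement that $\Gamma$ is differentiable at $(x,t)$ with vanishing gradient. The idea is to exploit the interplay between the two one-sided controls we have available: the time-monotonicity \eqref{e:lip}, which says $u$ grows at rate at least $c$ in $t$, and the spatial regularity of $\nabla u$, which near a point where $\nabla u$ vanishes forces $u$ to grow very slowly in $x$. Concretely, fix the free boundary point $(x,t)=(x,H(x))$ with $\nabla u(x,t)=0$. For a nearby spatial point $y$, I want to estimate $|H(y)-H(x)|$. By the fundamental theorem of calculus along the segment from $x$ to $y$ at the fixed time $t$, together with $u(x,t)=0$ and $\nabla u(x,t)=0$, we get
\[
|u(y,t)| \le \int_0^1 |\nabla u(x+\tau(y-x),t)|\,|y-x|\,d\tau \le \|\nabla u\|_{C^{\alpha,\alpha/2}}\,|y-x|^{1+\alpha},
\]
using Lemma~\ref{l:holderspace} and $\nabla u(x,t)=0$. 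Thus $|u(y,t)| \le C|y-x|^{1+\alpha}$.

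Next I convert this bound on $u$ into a bound on $H$ using the same mechanism as in the proof of Lemma~\ref{l:lipschitz}, but now with the sharper estimate. If $s \ge t$ with $s - t \ge \tfrac{C}{c}|y-x|^{1+\alpha}$, then by \eqref{e:lip}, $u(y,s) \ge u(y,t) + c(s-t) \ge -C|y-x|^{1+\alpha} + c(s-t) \ge 0$, forcing $H(y) \le t + \tfrac{C}{c}|y-x|^{1+\alpha}$; symmetrically, using $s \le t$, one obtains $H(y) \ge t - \tfrac{C}{c}|y-x|^{1+\alpha}$. Hence
\[
|H(y)-H(x)| \le \frac{C}{c}\,|y-x|^{1+\alpha}.
\]
This says $H$ has a first-order Taylor expansion at $x$ with zero linear term and error $o(|y-x|)$, so $H$ is differentiable at $x$ with $\nabla H(x)=0$. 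Rephrasing in terms of the space-time hypersurface $\Gamma = \{(x,H(x))\}$, the tangent plane at $(x,t)$ is the horizontal hyperplane $\{s=t\}$, i.e. $\Gamma$ is differentiable at $(x,t)$ with $|\nabla\Gamma(x,t)|=0$.

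I expect the main subtlety — rather than an obstacle — to be making precise the passage from "$|H(y)-H(x)| \le C|y-x|^{1+\alpha}$" to "differentiability of $\Gamma$ at $(x,t)$ with zero gradient," since $\Gamma$ is a graph only after invoking \eqref{e:lip}, and one should be careful that the estimate on $u(y,t)$ really does require only $\nabla u(x,t)=0$ together with the $C^{\alpha,\alpha/2}$ bound on $\nabla u$ on a fixed ball around $(x,t)$ (both of which we have from Lemma~\ref{l:holderspace} and the hypothesis). A secondary point is to note that, unlike the case $|\nabla u(x,t)| > 0$ treated in Section~\ref{s:pos}, here we get at each such point the \emph{exponent} $1+\alpha$ for free, but the constant $C/c$ depends on the $C^{\alpha,\alpha/2}$ norm of $\nabla u$, which is uniform on compact subsets of $Q_1$ by Lemma~\ref{l:holderspace}; this uniformity is what will be needed later to patch the two regimes together into the global $C^{1,\alpha/2}$ statement of Theorem~\ref{t:main}.
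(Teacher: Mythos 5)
Your proposal is correct and follows essentially the same route as the paper: apply the fundamental theorem of calculus along the spatial segment, use the $C^{\alpha,\alpha/2}$ bound on $\nabla u$ from Lemma~\ref{l:holderspace} together with $\nabla u(x,t)=0$ to get $|u(y,t)|\le C|y-x|^{1+\alpha}$, and then feed this into the Lipschitz-graph mechanism of Lemma~\ref{l:lipschitz} to obtain $|H(y)-H(x)|\le \tfrac{C}{c(1+\alpha)}|y-x|^{1+\alpha}$, hence differentiability with zero gradient. Your closing remarks about the uniformity of the constant and its role in patching the two regimes together are also consistent with how the paper uses this estimate in Theorem~\ref{t:holder}.
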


\begin{proof}
    Let $(x,t) \in \Gamma$ and assume that $|\nabla u(x,t)=0|$. We revisit the proof of Lemma \ref{l:lipschitz}, but utilize that $|\nabla u(x,t)=0|$, and use the H\"older estimate on the spatial gradient rather than just the $L^{\infty}$ estimate on the gradient. 
    
    From the fundamental theorem of Calculus
    \[
    u(y,t)= \int_0^{|y-x|} \nabla u(x+\xi,t) \cdot \frac{y-x}{|y-x|} \ d \xi. 
    \]
    Recall from Lemma \ref{l:holderspace} the H\"older estimate 
     \[
    |\nabla u(x,t)- \nabla u(y,t)| \leq C(|x-y|^{\alpha}+|t-s|^{\alpha/2}). 
    \]
    Then 
    \[
     u(y,t) \geq -\int_0^{y-x}C\xi^{\alpha} d \xi= -\frac{C}{\alpha+1} |y-x|^{\alpha+1}. 
    \]
    Then if $s>t$, we have 
    \[
    u(y,s)\geq -\frac{C}{\alpha+1} |y-x|^{\alpha+1}+ c(s-t). 
    \]
    A similar estimate holds from above so that as in the proof of Lemma \ref{l:lipschitz}, we have
    \begin{equation} \label{e:zeroalpha}
    -\frac{C}{c(\alpha+1)} |y-x|^{\alpha+1}\leq H(y)-H(x) \leq \frac{C}{c(\alpha+1)} |y-x|^{\alpha+1}. 
    \end{equation}
\end{proof}

We now have established that every point of the free boundary $\Gamma$ is differentiable. 
If $(x,t) \in \Gamma$, then we label $\nu_{x,t}$ as the unit normal to $\Gamma$ at $(x,t)$. We note that from the ideas in Lemma \ref{l:lipschitz}, we have that the unit normal lies in the cone $s\geq |y||\nabla u(x,t)|/c$, or written differently, the angle $\theta$ between $(0,1)$ and $\nu_{x,t}$ satisfies 
\[
\sin \theta\leq \frac{|\nabla u(x,t)|}{\sqrt{|\nabla u(x,t)|^2 + c^2}}. 
\]

We will not be able to show that $\partial_t u$ is H\"older continuous up to a point where the gradient vanishes. Indeed, as will be shown in Section \ref{s:unbound} for Example \ref{ex:last}, we cannot expect the time derivative to be continuous or even bounded. Thus, the H\"older estimate will blow up. We have to utilize that the spatial gradient is vanishing and balance this with how $\partial_t u$ is increasing. The next result is a corollary of Theorem \ref{t:hodograph} and follows from rescaling. 

\begin{corollary} \label{c:rescale}
 Assume $u$ is a solution to \eqref{e:main} in $Q_1$. Fix $0<\gamma<1-\alpha$, and assume that $|\nabla u(x,t)|>0$. Let $\rho$ be defined as $\rho^{1-\gamma}=|\nabla u(x,t)|/M$. There exists a constant $C_1$ depending on $\gamma$ such that 
 \begin{equation} \label{e:rho}
 \sup_{Q_{\rho/2}(x,t)} |\partial_t u| \leq C_1 M\rho^{-\gamma},
 \end{equation}
 and a constant $C_2$ depending on $\alpha,\gamma$ such that  
 \begin{equation} \label{e:rho2}
  | u_t(y,s)-u_t(x,t)| \leq C_2 r^{-2\gamma-\alpha} (|x-y|^{\alpha}+|t-s|^{\alpha/2}) \quad \text{ for any } (y,s) \in Q_{r/2}. 
 \end{equation}
\end{corollary}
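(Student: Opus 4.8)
The plan is to read off both estimates from Theorem~\ref{t:hodograph} via a single parabolic rescaling centered at $(x,t)$, of the type used in Proposition~\ref{p:rescale} but with the dilation tuned to the exponent $\gamma$. By \eqref{e:holderu} and Lemma~\ref{l:holderspace} we have $\sup_{Q_{1/2}}|u|\le C_0$ and $\|\nabla u\|_{C^{\alpha,\alpha/2}(Q_{1/2})}\le C_0$ for a universal $C_0$, and I would restrict to $(x,t)$ in a slightly smaller cylinder so that the rescaled domain below sits inside $Q_{1/2}$. After a rotation assume $\nabla u(x,t)=|\nabla u(x,t)|e_n$ and set
\[
 u_\rho(y,s):=\frac{u(\rho y+x,\ \rho^2 s+t)}{M\rho^{\,2-\gamma}},\qquad \rho^{1-\gamma}=\frac{|\nabla u(x,t)|}{M}.
\]
The exponent $2-\gamma$ in the normalization is chosen precisely so that $\nabla u_\rho(0,0)=e_n$ (using $|\nabla u(x,t)|=M\rho^{1-\gamma}$) and so that $(\partial_t-\Delta)u_\rho=(\rho^{\gamma}/M)\,\chi_{\{u_\rho>0\}}$ with $\rho^{\gamma}/M\le 1/M$. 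Combined with the $C^{\alpha,\alpha/2}$ bound on $\nabla u$ and with $M$ taken large, this places $u_\rho$ in the setting of Proposition~\ref{p:rescale} on $Q_1$ (gradient uniformly near $e_n$, right-hand side uniformly small), so Theorem~\ref{t:hodograph} applies to $u_\rho$.

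Theorem~\ref{t:hodograph} then gives $\|\partial_s u_\rho\|_{C^{\alpha,\alpha/2}(Q_{1/2})}\le C$ with $C=C(n,\alpha,M)$, since $\sup_{Q_1}|\partial_s u_\rho|$ is controlled by $\|\partial_s u_\rho\|_{L^p(Q_1)}$, which is finite by \eqref{e:lp} and \eqref{e:q4}. Now I would simply unwind the scaling. From $\partial_s u_\rho(y,s)=(\rho^{\gamma}/M)\,u_t(\rho y+x,\rho^2 s+t)$ we get $u_t(\rho y+x,\rho^2 s+t)=M\rho^{-\gamma}\,\partial_s u_\rho(y,s)$; taking the supremum over $(y,s)\in Q_{1/2}$ yields $\sup_{Q_{\rho/2}(x,t)}|u_t|\le CM\rho^{-\gamma}$, which is \eqref{e:rho}. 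For \eqref{e:rho2}, given $(y,s)\in Q_{\rho/2}(x,t)$ set $\tilde y=(y-x)/\rho$, $\tilde s=(s-t)/\rho^{2}$, so $(\tilde y,\tilde s)\in Q_{1/2}$ and
\[
 |u_t(y,s)-u_t(x,t)|=M\rho^{-\gamma}\,\bigl|\partial_s u_\rho(\tilde y,\tilde s)-\partial_s u_\rho(0,0)\bigr|\le CM\rho^{-\gamma}\bigl(|\tilde y|^{\alpha}+|\tilde s|^{\alpha/2}\bigr).
\]
Since $|\tilde y|^{\alpha}+|\tilde s|^{\alpha/2}=\rho^{-\alpha}\bigl(|y-x|^{\alpha}+|s-t|^{\alpha/2}\bigr)$ — passing freely between the topologically equivalent parabolic moduli using \eqref{e:alphabound} — this gives $|u_t(y,s)-u_t(x,t)|\le CM\rho^{-\gamma-\alpha}\bigl(|y-x|^{\alpha}+|s-t|^{\alpha/2}\bigr)$, and since $\rho<1$ and $\gamma>0$ we have $\rho^{-\gamma-\alpha}\le\rho^{-2\gamma-\alpha}$, which is \eqref{e:rho2}.

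The scaling algebra and the equivalence of parabolic Hölder moduli are routine; the one place that demands care — and where I expect the real work to be — is justifying that $u_\rho$ genuinely satisfies the hypotheses of Proposition~\ref{p:rescale} on all of $Q_1$. The oscillation of $\nabla u_\rho$ over $Q_1$ equals the oscillation of $\nabla u$ over $Q_\rho(x,t)$ divided by $|\nabla u(x,t)|$, hence is of order $\rho^{\alpha-1+\gamma}/M$; because $\alpha-1+\gamma<0$ this is not automatically small when $|\nabla u(x,t)|$ (and so $\rho$) is tiny, which reflects the fact that $\rho$ is a coarser scale than the natural Hodograph scale $(|\nabla u(x,t)|/M)^{1/\alpha}$ of Proposition~\ref{p:rescale}. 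I would handle this by fixing $M$ large depending on $n,\alpha,\gamma$ so that the rescaled gradient stays within the admissible range on $Q_1$ for $(x,t)$ in the relevant regime — this is the regime in which Theorem~\ref{t:hodograph} is legitimately applicable after rescaling, and the stated exponents $-\gamma$ and $-2\gamma-\alpha$ are exactly the loss one incurs for using the scale $\rho$ rather than the natural one; it is also why the constants are permitted to depend on $\gamma$.
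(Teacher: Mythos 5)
Your proposal is, modulo notation, the paper's own proof: rescale at the coarse scale $\rho$ with normalization $M\rho^{2-\gamma}$ (the paper writes $M\rho^{1-\gamma}$ in the denominator but then uses the derivative relation $\partial_s u_\rho=\rho^{\gamma}M^{-1}u_t$, which is the one your normalization produces, so your reconstruction is the internally consistent one), invoke Theorem~\ref{t:hodograph} for $u_\rho$ on $Q_1$, and unwind the scaling. The scaling algebra is correct, and your exponent $\rho^{-\gamma-\alpha}$ is even slightly better than the stated $r^{-2\gamma-\alpha}$; you should just add the observation that $\alpha<1-\gamma$ forces $r\le\rho\le1$, so that $Q_{r/2}\subset Q_{\rho/2}$ and $\rho^{-\gamma-\alpha}\le r^{-2\gamma-\alpha}$, since \eqref{e:rho2} is phrased in terms of $r$.

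The genuine gap is exactly the one you flag in your last paragraph, and the fix you propose does not close it. The hypotheses \eqref{e:q2}--\eqref{e:q4} for $u_\rho$ on $Q_1$ require the oscillation of $|\nabla u|$ over $Q_\rho(x,t)$ to be at most $|\nabla u(x,t)|/M=\rho^{1-\gamma}$, whereas Lemma~\ref{l:holderspace} only gives an oscillation of order $\rho^{\alpha}$, and $\rho^{\alpha}/\rho^{1-\gamma}=\rho^{\alpha+\gamma-1}\to\infty$ as $\rho\to0$ precisely because $\gamma<1-\alpha$. Taking $M$ large depending only on $n,\alpha,\gamma$ cannot repair this, since $M$ enters the offending ratio only as a fixed multiplicative factor while $\rho^{\alpha+\gamma-1}$ is unbounded; and restricting to ``the regime in which Theorem~\ref{t:hodograph} is legitimately applicable'' is circular, because the corollary is needed exactly in the limit $|\nabla u(x,t)|\to0$. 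The same obstruction hits the $L^{\infty}$ normalization: one only gets $\sup_{Q_1}|u_\rho|\lesssim\rho^{\alpha+\gamma-1}/M$, again unbounded. As written, the hodograph machinery is only justified on the natural cylinder $Q_{r}(x,t)$ with $r^{\alpha}=|\nabla u(x,t)|/M$, which yields the weaker exponent $r^{\alpha-1}$ rather than $\rho^{-\gamma}$; promoting the sup bound from scale $r$ to scale $\rho$ requires an additional argument (for instance a covering or iteration over the intermediate scales on which the gradient stays comparable). To be fair, the paper's proof applies Theorem~\ref{t:hodograph} to $u_\rho$ without comment and therefore contains exactly the same gap; but since you explicitly raised the issue, be aware that ``take $M$ large'' is not an answer to it.
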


\begin{proof}
 Fix $0<\alpha<1$ and $0<\gamma<1-\alpha$. Let $\rho$ be defined as $\rho^{1-\gamma}=|\nabla u(x,t)|/M$, and rescale with 
 \[
 u_\rho(y,s)=\frac{u(\rho y+x,\rho^2s+t)}{M\rho^{1-\gamma}}.
 \]
 From Theorem \ref{t:hodograph} 
 we have 
 \[
 \sup_{Q_{1/2}}|\partial_t u_\rho| \leq C.
 \]
 From the relation $\partial_t u_\rho(y,s)=\rho^{\gamma}M^{-1} u_t(\rho y+x,\rho^2s+t)$, we have
 \[
 \rho^{\gamma}M^{-1} \sup_{Q_{\rho/2}} |\partial_t u| \leq C,
 \]
 or 
 \[
 \sup_{Q_{\rho/2}} |\partial_t u| \leq CM \rho^{-\gamma}.
 \]
 Since $r$ is defined as in \eqref{e:r}, and since $\alpha<1-\gamma$, we have that $r \leq \rho$. Then 
  \[
 \sup_{Q_{r/2}} |\partial_t u| \leq \sup_{Q_{\rho/2}} \leq  |\partial_t u| \leq CM \rho^{-\gamma}
 \leq CM r^{-\gamma},
 \]
 and we obtain \eqref{e:rho}. 
  For the H\"older bound for $u$, we utilize the $\alpha$-H\"older bound for $u_\rho$ (rather than than the $(1-\gamma)$-H\"older bound) as well as the uniform bound for $u_t$ just shown. 
 \[
 \begin{aligned}
 |u_t(x,t)-u_t(y,s)|&=Mr^{1-\gamma-1}
 |\partial_t u_\rho((y-x)/\rho,(t-s)/\rho^2)-\partial_t u_\rho(0,0)|\\
 &\leq C Mr^{-\gamma}(\sup_{Q_1} |\partial_t u_\rho|) \rho^{-\alpha}(|x-y|^{\alpha}+|t-s|^{\alpha/2}) \\
 &= C (\sup_{Q_\rho} |u_t|) \ \rho^{-\alpha-\gamma}(|x-y|^{\alpha}+|t-s|^{\alpha/2}) \\
 &\leq C \rho^{-\alpha-2\gamma} (|x-y|^{\alpha}+|t-s|^{\alpha/2}). 
 \end{aligned}
 \]
 Using that $r\leq \rho$ we obtain \eqref{e:rho2}. 
\end{proof}

\begin{theorem} \label{t:holder}
    The free boundary $\Gamma$ is a $C^{1,\alpha/2}$ function of the spatial variable $x$ for any $0<\alpha<1$. 
\end{theorem}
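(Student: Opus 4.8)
The plan is to combine the three ingredients developed above: the Lipschitz graph property (Lemma \ref{l:lipschitz}), the $C^\infty$ regularity of $\Gamma$ near points where $|\nabla u| > 0$ (Section \ref{s:pos}, Theorem \ref{t:hodograph}), the differentiability with zero derivative at points where $|\nabla u| = 0$ (Lemma \ref{l:zeroder}), and crucially the quantitative rescaled H\"older estimate for $u_t$ in Corollary \ref{c:rescale}. We already know $\Gamma$ is given by a Lipschitz function $H$ of $x$ and that $H$ is differentiable at every point — at free-boundary points with nonvanishing spatial gradient because of the implicit-function/hodograph argument, and at points with vanishing gradient because of \eqref{e:zeroalpha}. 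So what remains is purely a question of the \emph{modulus of continuity of $\nabla H$}: we must show $|\nabla H(x) - \nabla H(y)| \le C|x-y|^{\alpha/2}$.

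The main step is to estimate $|\nabla H(x_0) - \nabla H(y_0)|$ for two free-boundary points. I would split into cases according to whether the spatial gradient vanishes at the relevant points. \textbf{Case 1:} both $|\nabla u(x_0, H(x_0))| = 0$ and $|\nabla u(y_0, H(y_0))| = 0$; then by Lemma \ref{l:zeroder} both gradients of $H$ are zero and there is nothing to prove. \textbf{Case 2:} one of them, say $|\nabla u(x_0, H(x_0))| > 0$, while the other vanishes. Here I would use the smoothness in a neighborhood of $(x_0, H(x_0))$ but track how the $C^{1,\beta}$ bound degenerates as one approaches the degenerate point; the natural radius to work at is $r$ with $r^\alpha \sim |\nabla u(x_0, H(x_0))|/M$ as in \eqref{e:r}, or the $\rho$ from Corollary \ref{c:rescale}. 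The relation between the normal $\nu_{x,t}$ and $|\nabla u(x,t)|$ recorded after Lemma \ref{l:zeroder} — namely $\sin\theta \le |\nabla u|/\sqrt{|\nabla u|^2 + c^2}$ — shows $|\nabla H(x_0)| \lesssim |\nabla u(x_0, H(x_0))|/c \sim r^\alpha$, so when $|x_0 - y_0|$ is comparable to or larger than $r$ we simply bound $|\nabla H(x_0) - \nabla H(y_0)| = |\nabla H(x_0)| \lesssim r^\alpha \lesssim |x_0 - y_0|^\alpha \le |x_0 - y_0|^{\alpha/2}$. \textbf{Case 3 (the genuine case):} both spatial gradients are nonzero; then I would use the hodograph representation $v_t = -u_t/u_n$, whose normal direction to $\Gamma$ is controlled by $\nabla v$, together with the scaled estimate \eqref{e:rho2}: $u_t$ is H\"older with exponent $\alpha$ but with constant blowing up like $\rho^{-2\gamma-\alpha}$ where $\rho^{1-\gamma} \sim |\nabla u|/M$. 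Balancing the H\"older gain $(|x_0-y_0|^\alpha)$ against the blow-up $\rho^{-2\gamma-\alpha}$ and the lower bound $|x_0 - y_0| \gtrsim \rho$ (which must hold, else one is inside the smooth neighborhood and the $C^\infty$ estimate applies directly), the net exponent one can extract is $\alpha - (2\gamma + \alpha) \cdot (\text{something}) $; optimizing over $\gamma \in (0, 1-\alpha)$ and $\alpha \in (0,1)$ produces a H\"older exponent that can be taken as close to $1/2$ as desired, i.e. any $\alpha/2$ with $\alpha < 1$. Translating the $u$-side estimates into estimates on $\nabla H$ uses that $\nu_{x,t}$ is a smooth function of $(\nabla u(x,t), u_t(x,t))$ away from the degenerate set.

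The hard part will be Case 3 — specifically, making the balancing of exponents rigorous: one has to choose the working scale $\rho$ as a function of $|\nabla u(x_0, H(x_0))|$, verify that $|x_0 - y_0| \gtrsim \rho$ in the regime where the smooth estimate does not already suffice, and then chain together \eqref{e:rho2}, the bound on $|\nabla u|$ along $\Gamma$ (which itself needs the $C^{\alpha,\alpha/2}$ bound of Lemma \ref{l:holderspace} plus \eqref{e:zeroalpha} to compare $|\nabla u(x_0,H(x_0))|$ with the distance to the degenerate set), and the relation between $\nabla H$ and $(\nabla u, u_t)$. A secondary technical point is uniformity: the constants in Theorem \ref{t:hodograph} and Corollary \ref{c:rescale} are stated for the rescaled solution satisfying Proposition \ref{p:rescale}, so one must check the constants obtained after unscaling depend only on $\alpha$, $\gamma$, $c$, $M$ and the interior distance, not on the particular point. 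Once the exponent-balancing inequality is in hand, a standard covering argument over $\overline{Q}_{1/2} \cap \Gamma$ upgrades the pointwise estimate to the local $C^{1,\alpha/2}$ bound for $H$, completing the proof and hence Theorem \ref{t:main}.
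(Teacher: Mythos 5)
Your overall architecture matches the paper's: reduce to an estimate on the modulus of continuity of the normal, split into cases according to whether the spatial gradient vanishes and according to whether the two points are close or far relative to the degeneracy scale $r$ (with $r^{\beta}\sim|\nabla u|/M$), handle the degenerate and "far apart" cases by the elementary bound $\sin\theta\le|\nabla u|/\sqrt{|\nabla u|^2+c^2}\le Mr^{\beta}/c\lesssim(|x-y|+|t-s|^{1/2})^{\beta}$, and handle the remaining case through Corollary \ref{c:rescale}. Your Cases 1 and 2 are essentially the paper's first two steps.

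However, your Case 3 has a genuine gap, and both of its branches fail as described. The mechanism that makes the hard case work is missing: writing $\nu=F(\nabla u,u_t)$ with $F(\xi)=\xi/|\xi|$, the point is not merely that $F$ is smooth off the degenerate set, but that the entries of $DF$ hitting the $u_t$-increment are themselves \emph{small in terms of} $|\nabla u|$: $|\partial_{\xi_{n+1}}F^{n+1}|=|\xi'|^2/|\xi|^{3}\lesssim|\nabla u|^2/c^{3}\sim r^{2\beta}$ and $|\partial_{\xi_{n+1}}F^{j}|\lesssim|\nabla u|\,|u_t|/c^{3}\lesssim r^{\beta-\gamma}$, and a concavity/convexity argument in $\xi_{n+1}$ is needed to control the finite difference of $F$ by these derivatives since $u_t(x,t)-u_t(y,s)$ need not be small. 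These positive powers of $r$ are what cancel the blow-up $r^{-\alpha-2\gamma}$ in \eqref{e:rho2}, leading to the admissible choice $\beta=\alpha+3\gamma<1$. Without them: (i) in your branch $|x_0-y_0|\gtrsim\rho$, the combination $|x_0-y_0|^{\alpha}\cdot\rho^{-2\gamma-\alpha}\lesssim|x_0-y_0|^{-2\gamma}$ has a negative exponent and proves nothing (moreover \eqref{e:rho2} is only valid for $(y,s)\in Q_{r/2}(x,t)$, i.e.\ for \emph{nearby} points, so it cannot be invoked in that regime at all); (ii) in your branch $|x_0-y_0|\lesssim\rho$, the $C^{\infty}$ estimate does \emph{not} "apply directly": the rescaled bounds of Theorem \ref{t:hodograph} are uniform only for $u_{\rho}$, and unscaling reintroduces negative powers of $\rho$ that diverge as $|\nabla u|\to0$ — this regime is precisely where the substance of the paper's proof lies. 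Repairing Case 3 therefore requires the quantitative $DF$ bounds and the monotonicity/concavity trick, not just the exponent bookkeeping you describe.
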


\begin{proof}
    Fix $0<\alpha<1$, and let $\alpha<\beta<1$. Fix $(x,t) \in \Gamma \cap Q_{1/2}$. If $|\nabla u(x,t)|=0$, then 
    $|\nabla u(y,s)|\leq |x-y|^{\alpha}+|t-s|^{\alpha/2}$, so that the angle $\theta$ between $\nu_{x,t}$ and $\nu_{y,s}$ satisfies
    \[
    \sin \theta \leq \frac{|\nabla u(y,s)|}{\sqrt{|\nabla u(y,s)|^2 + c^2}}
    \leq \frac{1}{c} (|x-y|^{\alpha/2}+|t-s|^{\alpha/2}). 
    \]
    We now assume that $|\nabla u(x,t)|>0$. 
    
   \noindent  \textbf{Case 1: $|x-y|+|t-s|^{1/2}\geq r/2$.}

     The angle $\theta$ between $\nu_{x,t}$ and $(0,1)$ is given by 
    \[
     \begin{aligned}
     \sin \theta_k&=\frac{|\nabla u(x,t)|}{\sqrt{|\nabla u(x,t)|^2 + (\partial_t u(x,t))^2}} \\
    &\leq \frac{|\nabla u(x,t)|}{c} \\
    &= \frac{Mr^{\beta}}{c}\\
    &\leq \frac{4M}{c}\left(|x - y|^{\beta}+|t-s|^{\beta/2} \right)\\
    &\leq \frac{4M}{c}\left(|x - y|^{\beta/2}+|t-s|^{\beta/2} \right) \\
    \end{aligned}
    \]
    Now
    \[
    \begin{aligned}
    |\nabla u(y,s)|&\leq |\nabla u(x,t)|+|\nabla u(x,t)-\nabla u(y,s)| \\
    &\leq Mr^{\beta} + |x-y|^{\beta}+ |t-s|^{\beta/2}\\
    &\leq 5M(|x-y|^{\beta/2}+ |t-s|^{\beta/2}).
    \end{aligned}
    \]
    Thus, a similar argument gives that the angle $\theta$ between $\nu_{y,s}$ and $(0,1)$ satisfies
    \[
    \sin \theta \leq \frac{10M}{c}\left(|x - y|^{\beta/2}+|t-s|^{\beta/2} \right).
    \]

   \noindent \textbf{Case 2: $|x-y|+|t-s|^{1/2}\leq r/2$.}
       To relate to the normal, we use the following vector valued function which projects onto the sphere. 
    \[
    F(\xi):= \frac{\xi}{|\xi|},
    \]
    for $\xi \in \mathbb{R}^{n+1}$. We will let $\xi_j =u_{x_j}(y,s)$ for $1\leq j \leq n$ and $\xi_{n+1}=u_t(y,s)$. If the differences $u_{x_i}(x,t)-u_{x_i}(y,s)$ and $u_t(x,t)-u_t(y,s)$ are small, then
    \[
     \nu_{x,t}-\nu_{y,s}\approx
     \begin{bmatrix}
         \frac{\partial F^1}{\partial \xi_1} & \dots &\frac{\partial F^1}{\partial \xi_n}
         & \frac{\partial F^1}{\partial \xi_{n+1}} \\
         \frac{\partial F^2}{\partial \xi_1} & \dots &\frac{\partial F^2}{\partial \xi_n}
         & \frac{\partial F^2}{\partial \xi_{n+1}} \\
          \vdots &\ddots &\vdots & \vdots\\
         \frac{\partial F^{n+1}}{\partial \xi_1} & \dots &\frac{\partial F^{n+1}}{\partial \xi_n}
         & \frac{\partial F^{n+1}}{\partial \xi_{n+1}} \\
     \end{bmatrix}
     \begin{bmatrix}
         u_{x_1}(x,t)-u_{x_1}(y,s) \\
         u_{x_2}(x,t)-u_{x_2}(y,s) \\
         \vdots \\
         \\
         u_{t}(x,t)-u_{t}(y,s) \\
     \end{bmatrix}.
    \]
    Now the difference $|u_{x_i}(x,t)-u_{x_i}(y,s)|$ will be small from H\"older continuity of the gradient, but $|u_t(x,t)-u_t(y,s)|$ is not necessarily small. However, we now show that we can still obtain a bound. First, 
    \[
    \frac{\partial F^j}{\partial \xi_k} = \frac{\delta_{kj}|\xi|^2 - \xi_k \xi_j}{|\xi|^{3/2}}. 
    \]
    Notice that $|\partial_{\xi_k} F^j|\leq 1/c$ since $|\eta| \geq c$. If $k \leq n$, then 
    $|u_{x_i}(x,t)-u_{x_i}(y,s)|$ is small, and 
    \[
     \left|\frac{\partial F^j}{\partial \xi_k} (u_{x_k}(x,t) -u_{x_k}(y,s))\right|
     \leq c^{-1}|(u_{x_k}(x,t) -u_{x_k}(y,s))| \leq c^{-1}(|x-y|^{\alpha} + |t-s|^{\alpha/2}). 
    \]
    If $k=j=n+1$, then $F^{n+1}$ is concave as a function of $\xi_{n+1}$. Then 
    \[
    |F^{n+1}(\xi_1,\ldots,\xi_n,u_t(x,t))- F^{n+1}(\xi_1,\ldots,\xi_n,u_t(y,s))|
    \leq \left|\frac{\partial F^{n+1}}{\partial \xi_{n+1}} (u_{t}(x,t) -u_{t}(y,s))\right|.
    \]
    Then from Corollary \ref{c:rescale} we have 
    \[
    \begin{aligned}
    \left|\frac{\partial F^{n+1}}{\partial \xi_{n+1}} (u_{t}(x,t) -u_{t}(y,s))\right|
    &\leq  \frac{|\xi|^2 - \xi_{n+1}^2}{|\xi|^{3/2}} |u_{t}(x,t) -u_{t}(y,s))|\\
    &\leq C (|x-y|^{\beta} + |t - s|^{\beta/2})^2 (|x-y|^{\alpha} + |t - s|^{\alpha/2})r^{-\alpha- 2\gamma} \\
    &\leq C (|x-y|^{\beta} + |t - s|^{\beta/2}). 
    \end{aligned}
    \]
    We now consider the final situation in which $k=n+1$ and $j<n+1$. By multiplying the original function $u$ by $c^{-1}$, we may assume $\partial_t u \geq 1$. This multiplication will only change the estimates by a factor of $c^{-1}$. Then  since $\partial_t u \geq 1$, then $F^j$ is a convex function of $\partial_t u$, and so 
    \[
    |F^{j}(\xi_1,\ldots,\xi_n,u_t(x,t))- F^{j}(\xi_1,\ldots,\xi_n,u_t(y,s))|
    \leq \left|\frac{\partial F^{j}}{\partial \xi_{n+1}} (u_{t}(x,t) -u_{t}(y,s))\right|.
    \]
     Then 
    \[
    \begin{aligned}
     \left|\frac{\partial F^{j}}{\partial \xi_{n+1}} (u_{t}(x,t) -u_{t}(y,s)) \right|
     &\leq Cr^{\beta} r^{-\gamma} r^{-\alpha-2\gamma}(|x-y|^{\alpha}+ |t - s|^{\alpha/2}) \\
    \end{aligned}
    \]
    We just have to let $1>\beta=\alpha+3\gamma$, and the result is proven.

\end{proof}

\section{Unbounded time derivative} \label{s:unbound}
In this section we show that Example \ref{ex:last} has an unbounded time derivative. Consequently, the space-time free boundary $\Gamma$ cannot be $C^{2,\alpha}$. We recall certain properties of Example \ref{ex:last}. We translate so that the origin is the last free boundary point in time. 
\begin{itemize}
    \item $u_t \geq 1$. 
    \item $\Gamma \cap \{|\nabla u|=0\}=(0,0)$. 
    \item $\Gamma \setminus (0,0)$ is locally a $C^{\infty}$ function of $x$. 
    \item $u(x,t)$ is radially symmetric in the spatial variable $x$. 
    \item For a fixed time $t<0$, we have $\{u(\ \cdot\ ,t)<0\}=B_r \times \{t\}$ for some $r$. 
\end{itemize}

\begin{lemma} \label{l:growth}
 Let $u$ be the constructed solution in Example \ref{ex:last} and translated, so that $\Gamma \cap \{|\nabla u|=0\}=(0,0)$. If $u_t$ is bounded in a neighborhood of $(0,0)$, then 
 \[
 S_r:= \sup_{Q_r} \frac{u(rx,r^2t)}{r^2}
 \]
 is bounded for $r\leq 1/2$. 
\end{lemma}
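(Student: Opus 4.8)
The plan is to deduce directly from the hypothesis that $u$ has \emph{quadratic} growth at the origin, namely $|u(x,t)| \leq C(|x|^2+|t|)$ on a small parabolic neighborhood of $(0,0)$. This is exactly what is needed: then $S_r = r^{-2}\sup_{Q_r}u \leq r^{-2}\sup_{Q_r}|u| \leq C$ for all small $r$, while for $r$ bounded away from $0$ the quantity $S_r$ is finite by continuity of $u$ and depends continuously on $r$, so only small $r$ is at issue. Assume $|u_t|\leq L$ a.e.\ on $Q_{r_0}$.

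First I would use the equation: $\Delta u = u_t - \chi_{\{u>0\}}$ a.e., hence $|\Delta u|\leq L+1$ a.e.\ on $Q_{r_0}$. By \eqref{e:lp} and Fubini, for a.e.\ $t_0$ with $|t_0|<(r_0/2)^2$ the spatial slice $x\mapsto u(x,t_0)$ lies in $W^{2,p}(B_{r_0/2})$ with $|\Delta u(\cdot,t_0)|\leq L+1$ a.e. Here the radial symmetry of $u$ in $x$ is the crucial structural input. Writing $u(x,t)=\phi(|x|,t)$ and $\rho=|x|$, the slice satisfies $\partial_\rho(\rho^{n-1}\phi_\rho) = \rho^{n-1}\,\Delta u$ a.e.\ in $\rho$, and $\phi_\rho(0,t_0)=0$ since $\nabla u$ is continuous (Lemma~\ref{l:holderspace}) and $u(\cdot,t_0)$ is radial. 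Integrating from $0$ gives $\rho^{n-1}\phi_\rho(\rho,t_0)=\int_0^\rho s^{n-1}\,\Delta u\,ds$, whence $|\nabla u(x,t_0)| \leq \tfrac{L+1}{n}|x|$ for $|x|<r_0/2$. As $\nabla u$ is continuous in $(x,t)$, this Lipschitz-type bound on the spatial gradient then holds for \emph{every} $t$ with $|t|<(r_0/2)^2$.

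Next I would integrate once more in $\rho$ to get $|u(x,t)-u(0,t)| \leq \tfrac{L+1}{2n}|x|^2$. Since $(0,0)\in\Gamma$ we have $u(0,0)=0$, and since $|u_t|\leq L$ a.e.\ and $u$ is continuous, $t\mapsto u(0,t)$ is Lipschitz with constant $L$, so $|u(0,t)|\leq L|t|$. Combining gives $|u(x,t)| \leq \tfrac{L+1}{2n}|x|^2 + L|t|$ on $Q_{r_0/2}$, hence $\sup_{Q_r}|u| \leq \bigl(\tfrac{L+1}{2n}+L\bigr)r^2$ for all $r\leq r_0/2$, which is the required bound on $S_r$.

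The step I expect to carry the real weight — and the one worth emphasizing — is the passage from $\Delta u\in L^\infty$ to a \emph{Lipschitz} bound on $\nabla u$ at the origin. For a general solution, $u_t\in L^\infty$ only yields $u\in W^{2,p}$, hence $u\in C^{1,\alpha}$ and growth of order $|x|^{1+\alpha}+|t|$, which is too weak to bound $S_r$ (indeed $r^{1+\alpha}/r^2\to\infty$). It is precisely the one-dimensional structure of the radial Laplacian, combined with $\nabla u(0,t)=0$, that recovers the missing power of $|x|$. The remaining ingredients — Fubini to make the slicewise statements rigorous, promoting the a.e.-in-$t$ gradient bound to all $t$ via continuity of $\nabla u$, and the Lipschitz-in-time bound for $u(0,\cdot)$ — are routine.
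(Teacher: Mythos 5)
Your proof is correct, but it takes a genuinely different route from the paper. The paper argues by contradiction via a blow-up: assuming $S_{r_j}/r_j^2\to\infty$ along a sequence satisfying a dyadic doubling condition, it rescales by $S_{r_k}$, extracts a caloric limit $u_0$ with quadratic growth, uses the boundedness of $u_t$ to force $\partial_t u_0\equiv 0$, and then kills $u_0$ with the minimum principle for nonnegative harmonic functions (using $u_0(0,0)=0$ and $u_0(\cdot,0)\geq 0$), contradicting the normalization $\sup_{Q_1}|u_0|=1$ --- the same Liouville-type scheme as in Lemma~\ref{l:holderspace}. You instead argue directly: from $|u_t|\leq L$ and the equation you get $|\Delta u|\leq L+1$ a.e., and the radial symmetry of Example~\ref{ex:last} lets you integrate the one-dimensional identity $(\rho^{n-1}\phi_\rho)_\rho=\rho^{n-1}\Delta u$ from $\rho=0$ (where $\nabla u$ vanishes by radiality and continuity) to obtain $|\nabla u(x,t)|\leq \tfrac{L+1}{n}|x|$, hence $|u(x,t)|\leq \tfrac{L+1}{2n}|x|^2+L|t|$. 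Your correct observation that $u_t\in L^\infty$ alone only gives $C^{1,\alpha}$ growth $|x|^{1+\alpha}$, and that the radial ODE structure is what recovers the full quadratic rate, is exactly the point where the two proofs diverge: the paper substitutes compactness plus a Liouville theorem for that missing power, at the price of a non-quantitative contradiction argument, while you get explicit constants and a two-sided pointwise bound. The trade-off is that your argument is tied to the radial symmetry of this particular example (which the paper does list as a standing property, so this is legitimate here), whereas the paper's blow-up argument would survive without it, needing only the sign information $u(\cdot,0)\geq 0$. The measure-theoretic housekeeping you flag (Fubini for the slices, promoting a.e.-in-$t$ bounds by continuity of $\nabla u$, absolute continuity of $t\mapsto u(0,t)$) is indeed routine and does not hide a gap.
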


\begin{proof}
    Suppose by way of contradiction that 
    \[
     \sup_{Q_r} |u_t| \leq C,
    \]
     but there exists a sequence $r_j \to 0$ such that 
    \[
    \begin{aligned}
      &(1) \quad \frac{S_{r_j}}{r_j^2} =+\infty \\
      &(2) \quad S_{2^kr_j} \leq 2^{2k} S_{r_j}\quad \text{ for any integer $k$ with } k\leq j.  
    \end{aligned}
    \]
    We consider the sequence of functions 
    \[
    u_{r_k}:= \frac{u(r_kx, r_k^2 t)}{S_{r_k}},
    \]
    and note that there exists a limiting function $u_0$ satisfying 
    \begin{itemize}
     \item $u_0(0,0)=0$.
     \item $u_0(x,0)\geq 0$ if $|x|>0$. 
     \item $(\partial_t-\Delta) u_0 =0$ in $\mathbb{R}^n \times (-\infty,\infty)$,
     \item $\sup_{Q_R} |u_0| \leq C R^2$,
     \item $\partial_t u_0 \equiv 0$,
     \item $\sup_{Q_1} |u_0|=1$. 
    \end{itemize}
    From the third and fifth properties above, we have that $u_0$ is harmonic and time-independent. Then $u_0$ is nonnegative, harmonic on $\mathbb{R}^n$,   and $u_0(0)=0$. By the minimum principle,  we must have $u_0 \equiv 0$, but this will contradict the final property listed above. 
\end{proof}

If the solution constructed in Example \ref{ex:last} has a bounded derivative, then we can take a blow-up solution, i.e. there exists a sequence $r_k \to 0$ and a limiting function $u_0$ such that 
\[
u_{r_k}(x,t):=\frac{u(r_kx, r_k^2 t)}{r_k^2}
\]
converges to a limiting function (which we relabel $u$) and satisfying 
\begin{align}
 & \sup_{Q_R} |u| \leq C R^2 \text{ for } R>0. \label{e:growth} \\
 & u_t \geq 1. \\
 & (\partial_t - \Delta)u = \chi_{\{u>0\}} \quad \text{ in all } \mathbb{R}^n \times (-\infty,\infty). \label{e:solve}\\
 &\text{ for } t<0, \ \{u(\ \cdot\ , t)<0\}=B_{\rho(t)} \times \{t\}\quad \text{ for some $\rho$ depending on } t. \label{e:rhot}\\
 & u(x,t)=u(y,t) \text{ if } |x|=|y|, \label{e:symmetry} \\
 & \{u<0\} \subset \{t<0\} \label{e:below}. 
\end{align}

\begin{remark} \label{r:1bound}
 A similar argument to the proof of Lemma \ref{l:growth} will show that if one assumes $u(0,h)/h$ is unbounded for $h>0$, then necessarily $|u(0,h)/h|$ will be unbounded for $h<0$. 
\end{remark}

We now prove a Weiss-type monotonicity formula which is an adaptation of those found in both \cite{w99,aw06}. 

\begin{proposition} \label{p:weiss}
 Define $T_r:= \mathbb{R}^n \times(-4r^2,-r^2)$ and let 
 \[
 G(x,t):=\frac{1}{(4\pi (-t))^{n/2}} e^{\frac{-|x|^2}{-4t}} \quad \text{ the backwards heat kernel.}
 \]
 If 
 \[
 \Psi(r,u):=\frac{1}{r^4} \int_{T_r} \left(|\nabla u|^2- \max\{u,0\} +\frac{u^2}{t} \right)G(x,t) \ dx \ dt, 
 \]
 then 
 \[
 \frac{d}{dr} \Psi(r,u)=r^{-5}\int_{-4r^2}^{-r^2} \int_{\mathbb{R}^n} \frac{1}{-t} \left(2t u_t +  \langle \nabla u, x \rangle -2 u\right)^2 G(x,t) \ dx \ dt\geq 0. 
 \]
 Consequently, $\Psi(r,u)$ is nondecreasing, and $\lim_{r \to 0} \Psi(r)$ exists. If $\Psi(r,u)$ is constant, then $u(rx,r^2t)=r^2u(x,t)$. 
\end{proposition}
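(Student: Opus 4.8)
The plan is to prove the Weiss-type monotonicity formula in Proposition~\ref{p:weiss} by direct differentiation of $\Psi(r,u)$ in $r$, exploiting the parabolic scaling built into the truncated time-slab $T_r = \mathbb{R}^n\times(-4r^2,-r^2)$ together with the homogeneity properties of the backwards heat kernel $G$. First I would introduce the parabolic rescaling $u_r(x,t):=r^{-2}u(rx,r^2t)$ and observe that, since $G$ satisfies $G(rx,r^2t)=r^{-n}G(x,t)$ and $T_1$ maps to $T_r$ under $(x,t)\mapsto(rx,r^2t)$ with Jacobian $r^{n+2}$, the functional can be rewritten as
\[
\Psi(r,u)=\int_{T_1}\left(|\nabla u_r|^2-\max\{u_r,0\}+\frac{u_r^2}{t}\right)G(x,t)\,dx\,dt.
\]
This is the standard trick that converts the $r$-derivative of the weighted integral over a moving domain into an $r$-derivative of the integrand over a fixed domain, and it is what makes the computation tractable. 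Then $\tfrac{d}{dr}\Psi(r,u)=\int_{T_1}\partial_r\big(|\nabla u_r|^2-\max\{u_r,0\}+u_r^2/t\big)G\,dx\,dt$, and since $\partial_r u_r\big|_{r=1}$ (and more generally $r\,\partial_r u_r = x\cdot\nabla u_r + 2t\,\partial_t u_r - 2u_r$) is exactly the generator of parabolic dilations acting on $u$, the quantity $Z:=2t\,u_t+\langle\nabla u,x\rangle-2u$ appearing in the claimed formula is precisely $r\,\partial_r u_r$ evaluated after undoing the rescaling.

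The core of the argument is then to show that, after carrying out the differentiation and integrating by parts, all the cross terms collapse into $r^{-5}\int_{-4r^2}^{-r^2}\int_{\mathbb{R}^n}\frac{1}{-t}Z^2\,G\,dx\,dt$. Concretely, I would compute $\partial_r$ of each of the three pieces of the integrand: the Dirichlet term produces $2\langle\nabla u_r,\nabla(\partial_r u_r)\rangle$, which after integrating by parts in $x$ against $G$ picks up a $-\Delta u_r$ and a $-\langle\nabla u_r,\nabla\log G\rangle = \langle\nabla u_r, x/(2t)\rangle$ term; the obstacle term produces $-\chi_{\{u_r>0\}}\partial_r u_r$; and the $u_r^2/t$ term produces $2u_r\,\partial_r u_r/t$. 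Using the PDE $(\partial_t-\Delta)u_r=\chi_{\{u_r>0\}}$ to replace $-\Delta u_r$ by $\chi_{\{u_r>0\}}-\partial_t u_r$, the obstacle contributions cancel and one is left with terms that, after an integration by parts in $t$ against $G$ on the slab $(-4,-1)$ — where the boundary terms at $t=-1$ and $t=-4$ need to be tracked, or alternatively one differentiates directly in $r$ keeping the slab and lets the moving endpoints generate the boundary contributions — combine into a perfect square. The key algebraic identity is that $2t\,u_t\cdot(2t u_t + \langle\nabla u,x\rangle - 2u)$ plus the spatial and zeroth-order contributions assemble, via $G_t = \Delta G$ and $\langle\nabla G,x\rangle = \tfrac{|x|^2}{2t}G$ together with $n$-dependent identities for $\int |x|^2 G$, into $\tfrac{1}{-t}Z^2$. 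I would verify this by testing against the known homogeneous solutions and by the scaling consistency check that both sides have the same $r$-homogeneity.

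The main obstacle I anticipate is handling the lack of classical regularity: $u_t$ is only in $L^p$ and $\chi_{\{u>0\}}$ is discontinuous, so the integrations by parts and the chain rule for $\partial_r\max\{u_r,0\}$ must be justified in the weak/Sobolev sense. For the truncation term one uses that $\{u_r=0\}$ has measure zero on the region of interest (a consequence of $u_t\ge 1$, which forces $\{u=0\}$ to be a Lipschitz graph of zero measure in space-time) so that $\partial_r\max\{u_r,0\}=\chi_{\{u_r>0\}}\partial_r u_r$ a.e.; the Gaussian weight $G$ and its derivatives are smooth and rapidly decaying on $T_r$ (which is bounded away from $t=0$), so the integrals converge absolutely and the integration by parts in $x$ over $\mathbb{R}^n$ has no boundary term. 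Justifying that $\tfrac{d}{dr}$ passes under the integral sign requires a dominated-convergence argument using the uniform $L^p$ bounds on $u,\nabla u,u_t,D^2u$ from \eqref{e:lp} together with the growth bound \eqref{e:growth}, $\sup_{Q_R}|u|\le CR^2$. Finally, the rigidity statement — $\Psi(r,u)$ constant implies $u(rx,r^2t)=r^2u(x,t)$ — is immediate from the formula: constancy forces $Z=2tu_t+\langle\nabla u,x\rangle-2u\equiv 0$ a.e. on each slab, hence on $\bigcup_r T_r=\mathbb{R}^n\times(-\infty,0)$, and $Z\equiv 0$ is exactly the infinitesimal form of parabolic $2$-homogeneity, which integrates along the dilation orbits $\lambda\mapsto(\lambda x,\lambda^2 t)$ to give $u(\lambda x,\lambda^2 t)=\lambda^2 u(x,t)$.
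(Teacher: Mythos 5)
Your proposal follows essentially the same route as the paper: rescale to $u_r$ on the fixed slab $T_1$, differentiate under the integral, integrate by parts in $x$ against $G$ using $\nabla G = \tfrac{x}{2t}G$, substitute the PDE to cancel the $\chi_{\{u_r>0\}}$ terms, and recognize the remainder as $\tfrac{r}{-t}\bigl(\tfrac{d u_r}{dr}\bigr)^2$, i.e.\ the square of the dilation generator $Z$. The only (harmless) overcomplication is your anticipated integration by parts in $t$ with $G_t=\Delta G$ and endpoint terms at $t=-1,-4$: in this truncated-slab formulation the $-\partial_t u_r$ term coming from the PDE substitution feeds directly into the perfect square, so no time integration by parts is needed.
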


\begin{proof}
 We note that by the growth condition \eqref{e:growth}, the integrals are well-defined, and all integration by parts will be justified. The only other condition we will use is \eqref{e:solve}. 
 If $u_r(x,t):=r^{-2}u(rx,r^2t)$, then by scaling $\Psi(r,u)=\Psi(1,u_r)$. We note that 
 \[
 \frac{d}{dr} u_r(x,t) = r^{-1}\left(-2u_r(x,t) + \nabla u_r(x,t) \cdot x+2 t u_t  \right). 
 \]
 Then 
 \[
 \begin{aligned}
     \frac{d}{dr} \Psi(r,u) 
     &= \frac{d}{dr} \Psi(1,u_r)\\
     &=\frac{d}{dr} \int_{-4}^{-1} \int_{\mathbb{R}^n} \left(|\nabla u_r|^2- 2\max\{u_r,0\} +\frac{u_r^2}{t} \right)G(x,t) \ dx \ dt\\
     &= \int_{-4}^{-1} \int_{\mathbb{R}^n} \frac{d}{dr}\left(|\nabla u_r|^2- 2\max\{u_r,0\} +\frac{u_r^2}{t} \right)G(x,t) \ dx \ dt\\
     &= \int_{-4}^{-1} \int_{\mathbb{R}^n} \left(2\langle\nabla u_r,\nabla \frac{d u_r}{dr}\rangle- 2\chi_{\{u_r>0\}}\frac{d u_r}{dr} + 2 \frac{u_r}{t} \frac{d u_r}{dr}\right)G(x,t) \ dx \ dt\\
     &= 2\int_{-4}^{-1} \int_{\mathbb{R}^n} \left(-\Delta u_r-\chi_{\{u_r>0\}} + \frac{u_r}{t}\right)\frac{d u_r}{dr}G(x,t) - \langle \nabla u_r ,\nabla G(x,t)\rangle \frac{d u_r}{dr}\ dx \ dt \\
     &=2\int_{-4}^{-1} \int_{\mathbb{R}^n} \left(-\partial_t u_r + \frac{u_r}{t}\right)\frac{d u_r}{dr}G(x,t) - \langle \nabla u_r ,\nabla G(x,t)\rangle \frac{d u_r}{dr}\ dx \ dt \\
     &=2\int_{-4}^{-1} \int_{\mathbb{R}^n} \left(-\partial_t u_r +\frac{u_r}{t}\right)\frac{d u_r}{dr}G(x,t) - \frac{1}{2t}\langle \nabla u_r ,x\rangle G(x,t)\frac{d u_r}{dr}\ dx \ dt \\
      &=2\int_{-4}^{-1} \int_{\mathbb{R}^n} \left(-\partial_t u_r + \frac{u_r}{t}- \frac{1}{2t}\langle \nabla u_r ,x\rangle\right)\frac{d u_r}{dr}G(x,t) \ dx \ dt \\
      &= \int_{-4}^{-1} \int_{\mathbb{R}^n} \frac{r}{-t} \left(\frac{d u_r}{dr} \right)^2 G(x,t) \ dx \ dt \\
      &\geq 0. 
 \end{aligned}
 \]
 Rescaling back, we obtain the representation $\frac{d}{dr}\Psi(r,u) $. The case of equality holds if and only if $\frac{d}{dr} u_r =0$ for all $r$, or if and only if $u_r$ is constant in $r$, or if and only if $u(rx,r^2t)=r^2u(x,t)$. 
\end{proof}

\begin{lemma} \label{l:blowup2}
 Let $u$ satisfy the conditions above. For any sequence $r_k \to 0$, there exists a subsequence and a limiting function $u_0$ such that $\lim u_{r_k} = u_0$ and $u_0$ also satisfies the conditions \eqref{e:growth} through \eqref{e:below}, and $u_0(rx,r^2t)=r^2u(x,t)$ for all $x$ and $t<0$. 
\end{lemma}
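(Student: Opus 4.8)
The plan is to combine the compactness furnished by the a priori estimates of Section~\ref{s:er} with the Weiss-type monotonicity formula of Proposition~\ref{p:weiss}, the latter supplying the rigidity (self-similarity) of the blow-up. First I would set up the compactness. For fixed $\rho>0$ the rescalings $u_{r_k}(x,t)=r_k^{-2}u(r_kx,r_k^2t)$ solve $(\partial_t-\Delta)u_{r_k}=\chi_{\{u_{r_k}>0\}}$ on $Q_{\rho/r_k}$, and by \eqref{e:growth} they satisfy $\sup_{Q_R}|u_{r_k}|\le CR^2$ with $C$ independent of $k$. Hence by \eqref{e:lp}, \eqref{e:holderu} and Lemma~\ref{l:holderspace}, on every compact subset of $\mathbb{R}^n\times(-\infty,\infty)$ the family $u_{r_k}$ is bounded in $C^{\alpha}$, $\nabla u_{r_k}$ is bounded in $C^{\alpha,\alpha/2}$, and $\partial_t u_{r_k},D^2u_{r_k}$ are bounded in $L^p$ for every $p<\infty$; moreover the interior gradient estimate gives $|\nabla u_{r_k}|\le C R$ on $Q_R$, a bound I will need for the Gaussian integrals below. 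A diagonal argument over an exhausting sequence of cylinders produces a subsequence (not relabelled) and a function $u_0$ with $u_{r_k}\to u_0$ in $C^{\alpha}_{loc}$, $\nabla u_{r_k}\to\nabla u_0$ in $C^{\alpha,\alpha/2}_{loc}$, and $\partial_t u_{r_k}\rightharpoonup\partial_t u_0$, $D^2u_{r_k}\rightharpoonup D^2u_0$ weakly in $L^p_{loc}$.

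Next I would verify that $u_0$ inherits conditions \eqref{e:growth} through \eqref{e:below}. The growth bound \eqref{e:growth}, the monotonicity $\partial_t u_0\ge1$ (passing to the limit through difference quotients in $t$, as in Proposition~\ref{p:increase}), the radial symmetry \eqref{e:symmetry}, and the inclusion \eqref{e:below} all survive the locally uniform limit. For the equation \eqref{e:solve} the point is the familiar free-boundary subtlety: at any point where $u_0>0$ (resp.\ $u_0<0$), local uniform convergence forces $u_{r_k}>0$ (resp.\ $u_{r_k}<0$) in a neighborhood, so $\chi_{\{u_{r_k}>0\}}\to\chi_{\{u_0>0\}}$ pointwise off $\{u_0=0\}$; since $\partial_t u_0\ge1$ the set $\{u_0=0\}$ is a space-time null set, so $\chi_{\{u_{r_k}>0\}}\to\chi_{\{u_0>0\}}$ in $L^p_{loc}$, and together with the weak $L^p$ convergence of $(\partial_t-\Delta)u_{r_k}$ this gives \eqref{e:solve}, exactly as in the proof of Theorem~\ref{t:e}. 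Property \eqref{e:rhot} requires slightly more care: for $t<0$ the sets $\{u_{r_k}(\cdot,t)<0\}$ are balls $B_{\rho_k(t)}$ by \eqref{e:rhot} for $u$, whose radii are bounded on compact $t$-intervals by the growth estimate, so after a further subsequence $\rho_k(t)\to\rho_0(t)$; since $u_0$ is caloric on $\{u_0<0\}$ and $u_0\le0$ there, the strong maximum principle then forces $\{u_0(\cdot,t)<0\}=B_{\rho_0(t)}$.

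For the self-similarity I would invoke Proposition~\ref{p:weiss}. By the scaling identity in its proof, $\Psi(\rho,u_{r_k})=\Psi(1,(u_{r_k})_\rho)=\Psi(1,u_{\rho r_k})=\Psi(\rho r_k,u)$, and since $r\mapsto\Psi(r,u)$ is nondecreasing and bounded (finiteness from \eqref{e:growth}, the Gaussian weight dominating the quadratic growth at spatial infinity), for each fixed $\rho>0$ we get $\Psi(\rho,u_{r_k})=\Psi(\rho r_k,u)\to\lim_{r\to0}\Psi(r,u)=:\Psi_0$, a finite number. On the other hand, using $\nabla u_{r_k}\to\nabla u_0$ in $L^2_{loc}$ and $u_{r_k}\to u_0$ locally uniformly, together with the bounds $|\nabla u_{r_k}|^2\le C(1+|x|^2)$ and $|u_{r_k}|^2\le C(1+|x|^2)^2$ against the backwards heat kernel, dominated convergence yields $\Psi(\rho,u_{r_k})\to\Psi(\rho,u_0)$. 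Hence $\Psi(\rho,u_0)=\Psi_0$ for all $\rho>0$, so $\Psi(\cdot,u_0)$ is constant, and the equality case of Proposition~\ref{p:weiss} gives $u_0(rx,r^2t)=r^2u_0(x,t)$ for all $x$ and $t<0$.

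The step I expect to be the main obstacle is the passage of the equation to the limit, namely establishing \eqref{e:solve} for $u_0$, since this is where the discontinuous right-hand side genuinely interacts with the free boundary; the saving grace is $\partial_t u_0\ge1$, which makes $\{u_0=0\}$ negligible and reduces matters to the argument already used in Theorem~\ref{t:e}. The verification of \eqref{e:rhot} for the limit (connectedness of the negativity set) is the second place calling for an honest argument rather than a soft limiting procedure.
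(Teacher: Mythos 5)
Your proposal is correct and follows essentially the same route as the paper: pass to a subsequential limit using the a priori estimates, verify that the conditions \eqref{e:growth}--\eqref{e:below} are inherited, and then use the scaling identity $\Psi(\rho,u_{r_k})=\Psi(\rho r_k,u)$ together with the monotonicity and equality case of Proposition~\ref{p:weiss} to conclude that $\Psi(\cdot,u_0)$ is constant and hence $u_0$ is self-similar. The paper dismisses the compactness and inheritance steps as ``straightforward,'' whereas you supply the details (in particular the passage of the discontinuous right-hand side to the limit via $\partial_t u_0\ge 1$), but the core argument is identical.
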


\begin{proof}
 That $u_0$ exists and satisfies all the conditions is straightforward. Now 
 \[
 \Psi(\rho,u_0)= \lim_{r_k \to 0} \Psi(\rho, u_{r_k})= \lim_{r_k \to 0} \Psi (\rho r_k, u)= 
 \Psi(0+,u). 
 \]
 Thus, $\Psi(\rho, u_0)$ is constant, and so $u_0(rx,r^2t)=r^2u_0(x,t)$. 
\end{proof}

\begin{theorem} \label{t:blowup}
 If dimension $n=1$, there does not exist a solution satisfying \eqref{e:growth} through \eqref{e:below} as well as $u(rx,r^2t)=r^2u(x,t)$ for $t<0$. 
\end{theorem}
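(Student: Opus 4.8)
The plan is to convert the whole problem into a second-order ODE in one variable by exploiting the self-similarity. For $t<0$ set $f(y):=u(y,-1)$; then $u(rx,r^2t)=r^2u(x,t)$ forces $u(x,t)=(-t)\,f\!\left(x/\sqrt{-t}\,\right)$ for every $t<0$, where $f$ is even by \eqref{e:symmetry} and grows at most quadratically by \eqref{e:growth}. Substituting and computing derivatives, one finds, with $y=x/\sqrt{-t}$, that $u_t-u_{xx}=-f(y)+\tfrac{y}{2}f'(y)-f''(y)$, so that by \eqref{e:solve} the function $f$ solves
\[
 f''-\tfrac{y}{2}f'+f=-\chi_{\{f>0\}},
\]
while the condition $u_t\ge 1$ becomes $\tfrac{y}{2}f'(y)-f(y)\ge 1$; evaluating this at $y=0$ gives $f(0)\le-1$, so by \eqref{e:rhot} the set $\{f<0\}$ is a nonempty symmetric interval $(-\rho_0,\rho_0)$.

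I would next solve the homogeneous equation on $(-\rho_0,\rho_0)$ and thereby pin down $\rho_0$. On that interval $u$ is caloric, hence $f$ is smooth and satisfies $f''-\tfrac y2 f'+f=0$ with $f'(0)=0$; a one-line power-series recursion (all coefficients past degree two vanish) shows every even solution is a multiple of $y^2-2$, so $f(y)=\tfrac a2(y^2-2)$ with $a:=-f(0)\ge1$. Since $\{f<0\}$ must equal $(-\rho_0,\rho_0)$ while $\tfrac a2(y^2-2)<0$ exactly for $|y|<\sqrt2$, this forces $\rho_0=\sqrt2$; and since the interior regularity for \eqref{e:solve} (Lemma \ref{l:holderspace}) makes $\nabla u$, hence $f'$, continuous, the continuation of $f$ across $\sqrt2$ has Cauchy data $f(\sqrt2)=0$, $f'(\sqrt2)=a\sqrt2>0$.

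The heart of the matter is the region $y>\sqrt2$. I would first show that $\{f>0\}\cap(\sqrt2,\infty)$ is the entire ray: since $u_t\ge1$, $u$ cannot vanish on any space-time open set, so (by self-similarity) $f$ cannot vanish on a subinterval; and if $f$ touched $0$ at an interior point $Y$ with $f>0$ on both sides, then the equation — whose right-hand side is $-1$ on each side — would give $f'(Y)=0$ and one-sided limits $f''\to\tfrac Y2 f'(Y)-f(Y)-1=-1<0$, impossible at a local minimum; combined with $f\ge0$ off $(-\sqrt2,\sqrt2)$ from \eqref{e:rhot}, this leaves $f>0$ on all of $(\sqrt2,\infty)$. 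On that ray the function $g:=f+1$ solves the homogeneous equation $g''-\tfrac y2 g'+g=0$, is positive, and has $g(\sqrt2)=1$. But the polynomially bounded homogeneous solutions are precisely the multiples of $y^2-2$ — the second independent solution, obtained by reduction of order, has Wronskian $\propto e^{y^2/4}$ and grows like $e^{y^2/4}$ times a polynomial — and all of these vanish at $\sqrt2$. Hence $g$ must carry a nonzero amount of the super-polynomially growing solution, contradicting the quadratic growth \eqref{e:growth} of $f=g-1$. This contradiction proves the theorem.

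I expect the delicate point to be the analysis on $y>\sqrt2$: showing that the positivity set is exactly the ray (combining the monotonicity $u_t\ge1$ with the local-extremum computation above) and establishing the dichotomy ``polynomially bounded $\Longleftrightarrow$ multiple of $y^2-2$'' for the homogeneous equation via the explicit reduction-of-order formula and its asymptotics. By contrast, the reduction to the ODE, the power-series identification of $f$ on $(-\rho_0,\rho_0)$, and the evaluation $\rho_0=\sqrt2$ are routine.
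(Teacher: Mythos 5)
Your argument is correct, and it shares the paper's skeleton --- the reduction via self-similarity to the ODE $-f''+\tfrac{y}{2}f'-f=\chi_{\{f>0\}}$, the identification $f(y)=\tfrac{a}{2}(y^2-2)$ on the negativity set, and the Cauchy data at $y=\sqrt{2}$ --- but the final contradiction is obtained by a genuinely different mechanism. The paper continues $f$ past $\sqrt 2$ as a power series in $(y-\sqrt 2)$ for the inhomogeneous equation and shows via the recursion \eqref{e:recursion} that all coefficients $a_n$ with $n\geq 3$ are negative, so $f$ eventually becomes negative, contradicting $f\geq 0$ from \eqref{e:rhot}; notably this route never uses the growth bound \eqref{e:growth} in the endgame. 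You instead pass to $g=f+1$, which solves the homogeneous Hermite-type equation on $(\sqrt2,\infty)$, and invoke the dichotomy that the only polynomially bounded solutions are multiples of $y^2-2$ (the second solution growing like $e^{y^2/4}y^{-3}$ by reduction of order), so $g(\sqrt2)=1\neq 0$ forces super-polynomial growth, contradicting \eqref{e:growth}. Your route buys two things: it is structurally cleaner (a linear-algebra statement about a two-dimensional solution space rather than a sign analysis of infinitely many coefficients), and it makes explicit a step the paper glosses over, namely that $\{f>0\}\supset(\sqrt2,\infty)$ so the ODE with right-hand side $-1$ actually holds on the whole ray; your local-minimum computation ($f''\to-1$ at a touching point) handles this, though to be airtight you should run it at the \emph{first} zero $Y_0=\inf\{y>\sqrt2: f(y)=0\}$ rather than assuming $f>0$ on both sides. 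The paper's recursion argument, by contrast, is more self-contained in that it needs no asymptotic analysis of the second homogeneous solution. Both proofs are valid.
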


\begin{proof}
    From the homogeneity property, for $t<0$ the function $u(x,t)$ is uniquely defined by $u(x,-1)$ in the following way $u(x,t)=-tu(x/\sqrt{-t},-1)$.
    Now \[
    u_t = -u(x/\sqrt{-t},-1) + \frac{1}{2}\frac{x}{\sqrt{-t}}u(x/\sqrt{-t},-1),
    \]
    and $u_{xx}(x,t)=u_{xx}(x/\sqrt{-t},-1)$. 
    Now considering the set on which $\{u<0\}$ we have $u_t -  u_{xx}=0$ which implies that 
    \[
    \frac{1}{2}x u_x(x,-1) - u(x,-1) - u_{xx}(x,-1)=0. 
    \]
    From \eqref{e:symmetry}, we have $u_x(0,-1)=0$. 
    Thus, if $f(x)=u(x,-1)$, then 
    \[
    \begin{cases}
        -f''(x)+\frac{1}{2}xf'(x)-f(x)=0 \\
        f(0)=-c<0, \quad f'(0)=0.
    \end{cases}
    \]
    The above solution is unique and is given by $f(x)=c(-1+\frac{1}{2}x^2)$. Then $u(x,t)=c(t+\frac{1}{2}x^2)$ on $\{u<0\}$. On the set $\{u>0\}\cap \{t<0\}$, we must also have 
    \[
     \begin{cases}
        -f''(x)+\frac{1}{2}xf'(x)-f(x)=1 \\
        f(\sqrt{2})=0, \quad f'(\sqrt{2})=c,
    \end{cases}
    \]
    and the additional requirement that $f(x)\geq 0$ for $x\geq \sqrt{2}$ from \eqref{e:rhot}. 
    The solution is unique, and from the theory of series solutions for ODEs we have that if 
    \[
    f(x)=\sum_{n=0}^{\infty} a_n (x-\sqrt{2})^n, 
    \]
    then we have the coefficients
    \[
     a_0=0, \quad a_1=c, \quad a_2 = \frac{\sqrt{2}}{4}c - \frac{1}{2},
    \]
    and the recursion relation
    \begin{equation} \label{e:recursion}
    \frac{\left(\frac{n}{2}-1 \right)a_n +\frac{\sqrt{2}}{2}(n+1)a_{n+1}}{(n+2)(n+1)}=a_{n+2} \quad \text{ for } n \geq 1. 
    \end{equation}
    Then 
    \[
    a_3= -\frac{\sqrt{2}}{12},
    \]
    and notice this is independent of $c$. Then
    \[
    a_4 = \frac{0 \cdot a_2 + \frac{\sqrt{2}}{2}(3)a_3}{4\cdot 3}=-\frac{1}{48}. 
    \]
    Since both $a_3,a_4 <0$, it follows from the recursion relation \eqref{e:recursion} that $a_n <0$ for $n \geq 3$. Then for any $c>0$, there is an $x_0$ large enough so that $f(x)<0$ for $x>x_0$. We then obtain a contradiction. 
\end{proof}


%
%
%

\bibliographystyle{amsplain}
\bibliography{unstablereferences}

\providecommand{\bysame}{\leavevmode\hbox to3em{\hrulefill}\thinspace}
\providecommand{\MR}{\relax\ifhmode\unskip\space\fi MR }
\providecommand{\MRhref}[2]{%
  \href{http://www.ams.org/mathscinet-getitem?mr=#1}{#2}
}
\providecommand{\href}[2]{#2}
\begin{thebibliography}{10}

\bibitem{aks25}
Mark Allen, Dennis Kriventsov, and Henrik Shahgholian, \emph{The free boundary
  for semilinear problems with highly oscillating singular terms}, J. Lond.
  Math. Soc. (2) \textbf{111} (2025), no.~5, Paper No. e70180, 37. \MR{4911866}

\bibitem{alp15}
Mark Allen, Erik Lindgren, and Arshak Petrosyan, \emph{The two-phase fractional
  obstacle problem}, SIAM J. Math. Anal. \textbf{47} (2015), no.~3, 1879--1905.
  \MR{3348118}

\bibitem{aw06}
J.~Andersson and G.~S. Weiss, \emph{Cross-shaped and degenerate singularities
  in an unstable elliptic free boundary problem}, J. Differential Equations
  \textbf{228} (2006), no.~2, 633--640. \MR{2289547}

\bibitem{acm19}
Ioannis Athanasopoulos, Luis Caffarelli, and Emmanouil Milakis, \emph{Parabolic
  obstacle problems, quasi-convexity and regularity}, Ann. Sc. Norm. Super.
  Pisa Cl. Sci. (5) \textbf{19} (2019), no.~2, 781--825. \MR{3964414}

\bibitem{e10}
Lawrence~C. Evans, \emph{Partial differential equations}, second ed., Graduate
  Studies in Mathematics, vol.~19, American Mathematical Society, Providence,
  RI, 2010. \MR{2597943}

\bibitem{fg24}
A.~Farina and R.~Gianni, \emph{Self-similar solutions for the heat equation
  with a positive non-{L}ipschitz continuous, semilinear source term},
  Nonlinear Anal. Real World Appl. \textbf{79} (2024), Paper No. 104121, 22.
  \MR{4724047}

\bibitem{gh92}
Roberto Gianni and Josephus Hulshof, \emph{The semilinear heat equation with a
  {H}eaviside source term}, European J. Appl. Math. \textbf{3} (1992), no.~4,
  367--379. \MR{1196817}

\bibitem{ks24}
Dennis Kriventsov and María Soria-Carro, \emph{A parabolic free transmission
  problem: flat free boundaries are smooth}, 2024, arXiv 2402.16805.

\bibitem{l68}
O.~A. Lady\v~zenskaja, V.~A. Solonnikov, and N.~N. Uraltseva, \emph{Linear and
  quasilinear equations of parabolic type}, Translations of Mathematical
  Monographs, vol. Vol. 23, American Mathematical Society, Providence, RI,
  1968, Translated from the Russian by S. Smith. \MR{241822}

\bibitem{l96}
Gary~M. Lieberman, \emph{Second order parabolic differential equations}, World
  Scientific Publishing Co., Inc., River Edge, NJ, 1996. \MR{1465184}

\bibitem{mw07}
R.~Monneau and G.~S. Weiss, \emph{An unstable elliptic free boundary problem
  arising in solid combustion}, Duke Math. J. \textbf{136} (2007), no.~2,
  321--341. \MR{2286633}

\bibitem{ns89}
J.~Norbury and A.~M. Stuart, \emph{A model for porous-medium combustion},
  Quart. J. Mech. Appl. Math. \textbf{42} (1989), no.~1, 159--178. \MR{992797}

\bibitem{r13}
Radu Precup, \emph{Linear and semilinear partial differential equations}, De
  Gruyter Textbook, Walter de Gruyter \& Co., Berlin, 2013, An introduction.
  \MR{2986215}

\bibitem{w99}
G.~S. Weiss, \emph{Self-similar blow-up and {H}ausdorff dimension estimates for
  a class of parabolic free boundary problems}, SIAM J. Math. Anal. \textbf{30}
  (1999), no.~3, 623--644. \MR{1677947}

\end{thebibliography}

\end{document}